\definecolor{myblue}{rgb}{0, 0, 0.66}
\newcommand{\longsquiggly}{\xymatrix{{}\ar@{~>}[r]&{}}}
\DeclareRobustCommand\widecheck[1]{{\mathpalette\@widecheck{#1}}}
\def\@widecheck#1#2{%
    \setbox\z@\hbox{\m@th$#1#2$}%
    \setbox\tw@\hbox{\m@th$#1%
       \widehat{%
          \vrule\@width\z@\@height\ht\z@
          \vrule\@height\z@\@width\wd\z@}$}%
    \dp\tw@-\ht\z@
    \@tempdima\ht\z@ \advance\@tempdima2\ht\tw@ \divide\@tempdima\thr@@
    \setbox\tw@\hbox{%
       \raise\@tempdima\hbox{\scalebox{1}[-1]{\lower\@tempdima\box
\tw@}}}%
    {\ooalign{\box\tw@ \cr \box\z@}}}
\newtheorem*{heuristic}{Heuristic}
\newtheorem{lemma}{Lemma}
\newtheorem{definition}[lemma]{Definition}
\newtheorem{proposition}[lemma]{Proposition}
\newtheorem{therm}[lemma]{Theorem}
\newtheorem{corollary}[lemma]{Corollary}
\newcommand{\push}[1]{\normalfont{{\ensuremath{{\mathsf{push}}}(#1)}}}
\newcommand{\pushop}{\normalfont{\ensuremath{{\mathsf{push}}}}}
\newcommand{\pushl}[1]{\normalfont{\ensuremath{{\mathsf{push}}_{{\mathsf{l}}}}(#1)}}
\newcommand{\pushll}{\normalfont{\ensuremath{{\mathsf{push}}_{{\mathsf{l}}}}}}
\newcommand{\pushr}[1]{\normalfont{\ensuremath{{\mathsf{push}}_{{\mathsf{r}}}}(#1)}}
\newcommand{\pushlr}{\normalfont{\ensuremath{{\mathsf{push}}_{{\mathsf{lr}}}}}}
\newcommand{\pushrr}{\normalfont{\ensuremath{{\mathsf{push}}_{{\mathsf{r}}}}}}
\newcommand{\smashinc}[2]{\normalfont{\ensuremath{\langle #1,#2 \rangle}}}
\newcommand{\pushzero}[1]{\normalfont{\ensuremath{{\mathsf{push}}_{0}}(#1)}}
\newcommand{\pushone}[1]{\normalfont{\ensuremath{{\mathsf{push}}_{1}}(#1)}}
\newcommand{\pushtwo}[1]{\normalfont{\ensuremath{{\mathsf{push}}_{2}}(#1)}}
\newcommand{\pushzeroone}[1]{\normalfont{\ensuremath{{\mathsf{push}}_{0,1}}(#1)}}
\newcommand{\pushzerotwo}[1]{\normalfont{\ensuremath{{\mathsf{push}}_{0,2}}(#1)}}
\newcommand{\pushonetwo}[1]{\normalfont{\ensuremath{{\mathsf{push}}_{1,2}}(#1)}}
\newcommand{\teletype}[1]{\ensuremath{\mathtt{#1}}}
\newcommand{\systemname}[1]{\teletype{\color{darkgray}#1}\xspace}
\newcommand{\Agda}{\systemname{Agda}}
\newcommand{\agdaCubical}{\systemname{agda/cubical}}
\newcommand{\CubicalAgda}{\systemname{Cubical} \systemname{Agda}}
\newlength{\LETTERheight}
\newcommand{\ox}{\otimes}%
\newcommand{\bZ}{\ensuremath{\mathbb{Z}}}
\newcommand{\refl}{\mathsf{refl}}
\newcommand{\trunc}[1]{\lvert\,#1\,\rvert}
\newcommand{\truncT}[1]{\lVert#1\rVert}
\newcommand{\north}{\mathsf{north}}
\newcommand{\south}{\mathsf{south}}
\newcommand{\merid}[1]{\mathsf{merid}\;{#1}}
\newcommand{\ap}[2]{\normalfont{\mathsf{ap}}_{#1}{(#2)}}
\newcommand{\apap}[2]{\ap{\mathsf{ap}_{#1}}{#2}}
\newcommand{\smashs}[2]{{#1}\,\widetilde{\wedge}\,{#2}}
\newcommand{\fw}[2]{\normalfont{\ensuremath{\mathsf{FW}_{#1}(#2)}}}
\newcommand{\fwred}[2]{\normalfont{\ensuremath{{\mathsf{FS}}_{#1}(#2)}}}
\definecolor{dkblue}{rgb}{0,0.1,0.5}
\definecolor{lightblue}{rgb}{0,0.5,0.5}
\definecolor{dkgreen}{rgb}{0,0.6,0}
\definecolor{dkbrown}{rgb}{0.4,0,0}
\definecolor{dkviolet}{rgb}{0.6,0,0.8}
\newcommand{\squigmapsto}{\resizebox{1mm}{1.5mm}{\raisebox{2mm}{$\vert$}}\hspace{-.6mm}$\rightsquigarrow$}
\newif\ifcitation
\DeclareRobustCommand{\Vv}[1]{\ifcitation Van\else van\fi}
\begin{document}





\title{Symmetric Monoidal Smash Products in Homotopy Type Theory}

\author{
  Axel Ljungström\\
  \footnotesize{Stockholm University, Sweden}\\
  \footnotesize{\texttt{axel.ljungstrom@math.su.se}}
}
\date{}




\maketitle

\paragraph*{Abstract:}
In Homotopy Type Theory, few constructions have proved as troublesome
as the smash product. While its definition is just as direct as in
classical mathematics, one quickly realises that in order to define
and reason about functions over iterations of it, one has to verify an
exponentially growing number of coherences. This has led to crucial
results concerning smash products remaining open. One particularly
important such result is the fact that smash products form a
(1-coherent) symmetric monoidal product on the universe of pointed
types. This fact was used, without a complete proof, by
e.g. \citet{Brunerie16} in his PhD thesis to construct the cup product
on integral cohomology and is, more generally, a fundamental result in
traditional algebraic topology. In this paper, we salvage the
situation by introducing a simple informal heuristic for reasoning
about functions defined over iterated smash products. We then use the
heuristic to verify e.g. the hexagon and pentagon identities, thereby
obtaining a proof of symmetric monoidality. We also provide a formal
statement of the heuristic in terms of an induction principle
concerning the construction of homotopies of functions defined over
iterated smash products. The key results presented here have been
formalised in the proof assistant \CubicalAgda.

\paragraph*{Acknowledgements:}
The author would like to thank Evan Cavallo for several fruitful
discussions, for his prior unpublished formalisations of smash
products in \CubicalAgda---these have been useful as a source of
inspiration for the formalisation of this paper---and, of course, for
his contribution of the incredibly useful Lemma~\ref{lem:evan} to the homotopy type theory literature.

This paper is based upon research supported by the Swedish Research
Council (Vetenskapsrådet) under Grant No.~2019-04545.

\tableofcontents
\newpage



\section{Introduction}
\label{sec:intro}

In his 2016 proof of $\pi_4(S^3) \cong \bZ/2\bZ$ in Homotopy Type
Theory (HoTT), \citet{Brunerie16} crucially uses---but never proves in
detail---that the smash product is (1-coherent) symmetric
monoidal. Due to the vast amount of path algebra involved when
reasoning about smash products in HoTT, this has since remained open. While it
turns out that smash products are not needed for Brunerie's
proof~\citep{LM23}, the problem is still interesting in its own
right.

Several attempts have been made at salvaging the
situation. \citet{FlorisPhd} came very close by considering an
argument using closed monoidal categories but left a gap where the
path algebra became too technical. To be more precise, Van Doorn never
verified that the equivalence
$$(A \wedge B \to_\star C) \simeq (A \to_\star (B \to_\star C))$$ is a
pointed natural equivalence. Another line of attack by
Cavallo~and~Harper~\citep{CavalloHarper20, Cavallo21} is the addition
of parametricity to the type theory, which leads to a rather ingenious
proof of the theorem. This, of course, happens at the expense of making
the type theory more complicated. Yet another solution was studied by
\citet{brunerie18} who used \Agda meta-programming to generate
the relevant proofs. Possible philosophical objections to such a
solution aside, Brunerie's generated proof of the pentagon identity
failed to type-check due to its eating up too much memory.

In this paper, we provide another solution by introducing an informal
heuristic for reasoning about functions defined over iterated smash
products. This approach vastly reduces the complexity of the identity proofs involved. We
use this to give a complete proof of the fact that the smash product
is 1-coherent symmetric monoidal. We finally discuss how to make the heuristic formal
and express (a version of it) as a theorem which we also prove.

The paper is structured as follows. In Section~\ref{sec:background},
we introduce the two fundamental concepts of interest, namely
(1-coherent) symmetric monoidal (wild) categories and smash
products. Section~\ref{sec:assoc} introduces a new higher inductive
type (HIT) capturing double smash products. We use this to sketch the
construction of the associator map $\alpha_{A,B,C} : (A \wedge B)
\wedge C \xrightarrow{\sim} A \wedge (B \wedge C)$. In
Section~\ref{sec:heuristic}, we make a small detour and discuss
induction principles for the smash product. It is here that we
introduce the heuristic mentioned above. In
Section~\ref{sec:symmetric}, we apply our heuristic and sketch the
proof of the symmetric monoidality of the smash product. Finally, in
Section~\ref{sec:equaliser}, we discuss the translation of our heuristic
into a formal result. We provide one suggestion and prove it correct.

This paper is written in the informal flavour of type theory employed
in e.g. the HoTT Book \linebreak \citep{HoTT13}. Nevertheless, all key results have been
formalised in the proof assistant
\CubicalAgda~\citep{VezzosiMortbergAbel19}, a cubical extension of
\Agda which, in particular, enjoys native support for HITs. A file summarising the relevant formalisations can be found in
the \agdaCubical library at
\url{https://github.com/agda/cubical/blob/master/Cubical/Papers/SmashProducts.agda}
(and, alternatively, on a frozen branch at
\url{https://github.com/aljungstrom/cubical/blob/smashpaper/Cubical/Papers/SmashProducts.agda}).

\section{Background}
\label{sec:background}
Let us briefly introduce the key concepts of this paper: 1-coherent symmetric
monoidal (wild) categories and smash products. We assume familiarity
with HoTT and refer to the HoTT Book \linebreak \citep{HoTT13} for the basic
constructions and definitions used here.
\subsection{Symmetric monoidal wild categories}
To make the statements in this paper somewhat more compact, we will
employ the framework of \emph{wild categories}. These are defined to be just like categories but
without any h-level assumptions~\citep{CapriottiKraus17}:
\begin{definition}[Wild categories]
  A wild category is a category with a \textbf{type} of objects and
  \textbf{types} of morphisms.
\end{definition}
The difference between a wild category and a category is that the latter asks for \emph{sets} of morphisms. In this paper, the wild category of interest is that of pointed types.
\begin{proposition}\label{prop:pointed-precat}
  Let $\mathsf{Type}_\star$ denote the universe of pointed types (at
  some universe level). This universe forms a wild category with
  $\mathsf{Type}_\star[A,B] := (A \to_\star B)$, i.e.\ with pointed functions as morphisms.
\end{proposition}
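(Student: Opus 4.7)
The plan is to unpack the wild category structure and give each piece directly. Recall that a pointed function $f : (A,a_0) \to_\star (B,b_0)$ is a pair $(f, \hat{f})$ consisting of an underlying function $f : A \to B$ together with a basepoint preservation proof $\hat{f} : f(a_0) = b_0$. Since we are only asking for a wild category, we do not need to worry about any truncation of the hom-types, so the only data to produce are identities, composition, and the three equations (left unit, right unit, associativity).

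First I would define the identity morphism on $(A, a_0)$ as the pair $(\id_A, \refl_{a_0})$, and composition of $f : (A,a_0) \to_\star (B,b_0)$ with $g : (B, b_0) \to_\star (C, c_0)$ as the pointed function whose underlying map is $g \circ f$ and whose basepoint preservation proof is $\ap{g}{\hat{f}} \cdot \hat{g}$. All of this is completely routine.

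Next I would check the three coherences. Equality of pointed functions unfolds (via function extensionality and path induction) to a pair consisting of a homotopy $h : \forall x,\, g(x) = g'(x)$ between the underlying functions together with a compatibility square relating $\hat{g}$ and $\hat{g}'$ along $h(a_0)$. For the left unit law $\id \circ f = f$, the homotopy on underlying functions is $\refl$ and the compatibility condition reduces to showing $\ap{\id}{\hat{f}} \cdot \refl = \hat{f}$, which holds by the usual computation rules. The right unit law is similar. For associativity $(h \circ g) \circ f = h \circ (g \circ f)$, the underlying functions are definitionally equal, and the remaining path equality is a short calculation involving $\ap{h}{\ap{g}{\hat{f}} \cdot \hat{g}} = \ap{h \circ g}{\hat{f}} \cdot \ap{h}{\hat{g}}$ together with associativity of path composition.

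There is no real obstacle here; the only mildly irritating step is producing the compatibility squares in the right form, but each of them trivialises to a statement about path composition in the fibre over the basepoint. Importantly, because we are working in a wild category, we stop once these equalities are produced and do not have to verify any further coherences (e.g.\ the pentagon for associativity of composition), which is precisely what makes this proposition immediate and what motivates working in the wild setting throughout the paper.
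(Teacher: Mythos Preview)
Your proposal is correct and is precisely the routine verification one would carry out; the paper itself states this proposition without proof, treating it as standard. There is nothing to compare here, since the paper simply asserts the result and moves on.
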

The main goal of this paper is to show that $\mathsf{Type}_\star$ is not only a wild category but a (1-coherent) \emph{symmetric monoidal} wild category, so let us define this.
\begin{definition}[Monoidal wild categories]
  A monoidal wild category is a wild category $M$ with
  \begin{itemize}
  \item a functor $\otimes : M \times M \to M$,
    \item a unit, i.e.\ an element $I : M$ with natural
      isomorphisms $\lambda_A : I \otimes A \cong A$ and $\rho_A : A
      \otimes I \cong A$,
    \item a family of isomorphisms $\alpha_{A,B,C} : ((A \ox B) \ox C) \cong (A
      \ox (B \ox C))$ natural in all arguments
  such that
  \begin{itemize}
  \item the \textbf{triangle identity} holds, i.e.\ the following diagram commutes,
    \[\begin{tikzcd}[ampersand replacement=\&]
	{(A \otimes I) \otimes B} \&\& {A \otimes (I \otimes B)} \\
	\& {A \otimes B}
	\arrow["{\alpha_{A,I,B}}", from=1-1, to=1-3]
	\arrow["{\rho_A \otimes 1_B}"', from=1-1, to=2-2]
	\arrow["{1_A \otimes \lambda_B}", from=1-3, to=2-2]
    \end{tikzcd}\]
  \item the \textbf{pentagon identity} holds, i.e.\ the following diagram commutes.
\[\begin{tikzcd}[ampersand replacement=\&]
	\& {((A \otimes B) \otimes C) \otimes D} \\
	{(A \otimes (B \otimes C)) \otimes D} \&\& {(A \otimes B) \otimes (C \otimes D)} \\
	{A \otimes ((B \otimes C) \otimes D)} \&\& {A \otimes (B \otimes (C \otimes D))}
	\arrow["{\alpha_{A,B,C} \otimes 1_D}"', from=1-2, to=2-1]
	\arrow["{\alpha_{A,B\otimes C,D}}"', from=2-1, to=3-1]
	\arrow["{\alpha_{A\otimes B,C,D}}", from=1-2, to=2-3]
	\arrow["{\alpha_{A,B,C\otimes D}}", from=2-3, to=3-3]
	\arrow["{1_A\otimes\alpha_{B, C,D}}", from=3-1, to=3-3]
\end{tikzcd}\]
  \end{itemize}
\end{itemize}
\end{definition}
\begin{definition}[1-coherent symmetric monoidal wild categories]
  A 1-coherent symmetric monoidal wild category is a monoidal wild category equipped with a family of isomorphisms $\beta_{A,B} : A \ox B \cong B \ox A$, natural in both arguments, such that
  \begin{itemize}
  \item $\beta_{B,A} \circ \beta_{A,B} = 1_{A\ox B}$,
    \item the \textbf{hexagon identity} holds, i.e.\ the following diagram commutes.
    \[\begin{tikzcd}[ampersand replacement=\&]
	{(A \otimes B) \otimes C} \&\& {(B \otimes A) \otimes C} \\
	{A \otimes (B \otimes C)} \&\& {B\otimes (A\otimes C)} \\
	{(B \otimes C)\otimes A} \&\& {B\otimes (C\otimes A)}
	\arrow["{\beta_{A,B} \otimes 1_C}", from=1-1, to=1-3]
	\arrow["{\alpha_{A,B,C}}"', from=1-1, to=2-1]
	\arrow["{\beta_{A,B\otimes C}}"', from=2-1, to=3-1]
	\arrow["{\alpha_{B,C,A}}"', from=3-1, to=3-3]
	\arrow["{\alpha_{B,A,C}}", from=1-3, to=2-3]
	\arrow["{1_B\otimes \beta_{A,C}}", from=2-3, to=3-3]
\end{tikzcd}\]
  \end{itemize}
\end{definition}

\subsection{Smash products}
The model of the smash product we will use here is given by the
cofibre of the inclusion $A \vee B \hookrightarrow A \times B$,
i.e.\ the following (homotopy) pushout.
\[\begin{tikzcd}[ampersand replacement=\&]
	{A \vee B} \& {A \times B} \\
	1 \& {A \wedge B}
	\arrow[from=1-1, to=1-2]
	\arrow[from=1-1, to=2-1]
	\arrow[from=1-2, to=2-2]
	\arrow[from=2-1, to=2-2]
        \arrow["\lrcorner"{anchor=center, pos=0.125, rotate=180}, draw=none, from=2-2, to=1-1]
\end{tikzcd}\]
For the sake of clarity, let us spell this out in detail by giving explicit names to all constructors of $A \wedge B$. We give the smash product the following self-contained definition.
\begin{definition}[Smash products]\label{def:smash}
  The smash product of two pointed types $A$ and $B$ is the HIT generated by
  \begin{itemize}
  \item a point $\star_\wedge : A \wedge B$,
  \item points $\smashinc{a}{b} : A \wedge B$ for every pair $(a,b) : A \times B$,
  \item paths $\pushl{a}:\smashinc{a}{\star_B} = \star_\wedge$ for every point $a : A$,
  \item paths $\pushr{b} : \smashinc{\star_A}{b} = \star_\wedge$ for every point $b : B$,
  \item a coherence $\pushlr : \pushl{\star_A} = \pushr{\star_B}$.
  \end{itemize}
We always take $A \wedge B$ to be pointed by $\star_\wedge$.
\end{definition}
We remark we could equivalently have defined the smash product by the following pushout.
\[\begin{tikzcd}[ampersand replacement=\&]
	{A + B} \& {A \times B} \\
	{1 + 1} \& {A \wedge B}
	\arrow[from=1-1, to=1-2]
	\arrow[from=1-1, to=2-1]
	\arrow[from=1-2, to=2-2]
	\arrow[from=2-1, to=2-2]
        \arrow["\lrcorner"{anchor=center, pos=0.125, rotate=180}, draw=none, from=2-2, to=1-1]
\end{tikzcd}\]
This definition has the advantage of not having any $2$-dimensional
path constructors but has the disadvantage of having an
additional point constructor. It turns out that Definition~\ref{def:smash} suits our
purposes better, so we will stick with it.

Let us also verify that the smash product is functorial. In what
follows, a pointed function $A \to_\star B$ is a function $f : A \to B$ equipped
with a proof of pointedness $\star_f : f(\star_A) = \star_B$.
\begin{definition}
  For two pointed functions $f : A \to_\star C$ and $g : B \to_\star
  D$, there is an induced map $f \wedge g : A \wedge B \to_\star C
  \wedge D$ defined by
  \begin{align*}
    (f \wedge g)\,(\star_\wedge) &= \star_\wedge\\
    (f \wedge g)\,\langle a,b\rangle &= \langle f(a),g(a)\rangle\\
    \ap{f \wedge g}{\pushl{a}}  &= \ap{\langle f(a) , - \rangle}{\star_g}\cdot \pushl{f(a)}\\
    \ap{f \wedge g}{\pushr{b}}  &= \ap{\langle -,g(b) \rangle}{\star_f}\cdot \pushr{g(b)} \\
    \ap{\mathsf{ap}_{f \wedge g}}{\pushlr}  &= \dots
  \end{align*}
  where the omitted case is a simple coherence which will not matter
  for any future constructions or proofs. We take this map to be
  pointed by \textnormal{$\refl$}.
\end{definition}
We also take the opportunity to mention the commutativity of smash
products. This is trivial since the definition of $A \wedge B$ is entirely
symmetric in both arguments.
\begin{proposition}\label{prop:smash-comm}
  The swap map $A \times B \to B \times A$ induces a pointed
  equivalence $A \wedge B \simeq_\star B \wedge A$.
\end{proposition}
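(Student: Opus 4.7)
The plan is to construct the swap map $\sigma_{A,B} : A \wedge B \to B \wedge A$ directly via the recursion principle of the HIT in Definition~\ref{def:smash}, and then to show that it is a pointed equivalence by exhibiting $\sigma_{B,A}$ as a two-sided inverse through another induction, exploiting the manifest symmetry of the two constructions.

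Concretely, I set $\sigma(\star_\wedge) := \star_\wedge$ and $\sigma(\smashinc{a}{b}) := \smashinc{b}{a}$, and I dictate the behaviour on the one-dimensional path constructors by $\ap{\sigma}{\pushl{a}} := \pushr{a}$ and $\ap{\sigma}{\pushr{b}} := \pushl{b}$. The two-dimensional coherence attached to $\pushlr$ then demands a path $\pushr{\star_A} = \pushl{\star_B}$ in $B \wedge A$, which I take to be $\pushlr^{-1}$ (interpreted in $B \wedge A$). Pointedness of $\sigma$ is immediate, witnessed by $\refl$.

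To prove that $\sigma$ is an equivalence it suffices to check that $\sigma_{B,A} \circ \sigma_{A,B}$ is pointwise equal to the identity on $A \wedge B$, since the opposite composite is handled symmetrically. This equality is proved by induction on $A \wedge B$. On $\star_\wedge$ and on $\smashinc{a}{b}$ both sides compute definitionally to the same point. On $\pushl{a}$, functoriality of $\ap$ yields $\ap{\sigma_{B,A}}{\ap{\sigma_{A,B}}{\pushl{a}}} = \ap{\sigma_{B,A}}{\pushr{a}} = \pushl{a}$, and the $\pushr{b}$ case is completely symmetric.

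The only real obstacle is the $\pushlr$ case of this induction, where one must verify that the two 2-cells produced by applying $\sigma$ twice compose to the trivial 2-cell required by the induction principle. Since $\apap{\sigma}{\pushlr}$ was defined using $\pushlr^{-1}$ on each side, the composite collects an expression essentially of the shape $\pushlr^{-1}$ followed by its image under $\sigma$, which after a short path-algebra calculation collapses to $\refl$. This is the sort of low-dimensional bookkeeping that can still be carried out by hand, but it already foreshadows why the heuristic developed in Section~\ref{sec:heuristic} will be indispensable once we begin reasoning about functions on \emph{iterated} smash products.
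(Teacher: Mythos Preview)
Your proposal is correct and follows exactly the approach the paper intends: the paper dismisses the result as ``trivial since the definition of $A \wedge B$ is entirely symmetric in both arguments'' and gives no further argument, while you have simply unpacked what that symmetry amounts to at the level of the HIT constructors. Your handling of the $\pushlr$ coherence (using $\pushlr^{-1}$ on the other side and then checking that the double application cancels) is the expected bookkeeping, and your closing remark about this foreshadowing the need for the heuristic of Section~\ref{sec:heuristic} is apt.
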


\section{Associativity}
\label{sec:assoc}

Proving that the smash product is associative is far less straightforward
than proving that it is commutative. In fact, even the seemingly direct
task of constructing the associator map is no mean feat. While associativity
has already been verified by \citet{FlorisPhd} and, using a
computer generated proof, by \citet{brunerie18}, let us give a
direct construction of the equivalence. We do this because an explicit description makes the associator easier to trace when verifying e.g.\ the
pentagon identity. For this purpose, let us introduce a new HIT
capturing double smash products in a way which is neutral with respect
to the distribution of parentheses.
\begin{definition}
  Given pointed types $A$, $B$ and $C$, we define the type $\bigwedge(A,B,C)$ to be the HIT generated by
  \begin{itemize}
    \item a point $\star_{2\wedge}$,
    \item points $\langle a,b,c\rangle : \bigwedge(A,B,C)$ for each triple of points $(a,b,c) : A \times B \times C$, \medskip
    \item paths $\pushzero{b,c} : \langle \star_A ,b,c\rangle = \star_{2\wedge}$ for each pair $(b,c) : B \times C$,
    \item paths $\pushone{a,c} : \langle a, \star_B ,c\rangle = \star_{2\wedge}$ for each pair $(a,c) : A \times C$,
    \item paths $\pushtwo{a,b} : \langle a,b,\star_C \rangle = \star_{2\wedge}$ for each pair $(a,b) : A \times B$, \medskip
    \item paths $\pushonetwo{a} : \pushone{a,\star_C} = \pushtwo{a,\star_B} $ for $a : A$,
    \item paths $\pushzerotwo{b} : \pushzero{b,\star_C} = \pushtwo{\star_A,b} $ for $b : B$,
    \item paths $\pushzeroone{c} : \pushzero{\star_B,c}  = \pushone{\star_A,c} $ for $c : C$,
      \medskip
    \item a coherence \textnormal{$\mathsf{push}_{0,1,2}$} filling the the following triangle.
      \[\begin{tikzcd}[ampersand replacement=\&]
	{\pushzero{\star_B,\star_C}} \&\& {\pushtwo{\star_A,\star_B}} \\
	\& {\pushone{\star_A,\star_C}}
	\arrow["{\pushzerotwo{\star_B}}", from=1-1, to=1-3]
	\arrow["{\pushzeroone{\star_C}}"', from=1-1, to=2-2]
	\arrow["{\pushonetwo{\star_A}}"', from=2-2, to=1-3]
\end{tikzcd}\]
  \end{itemize}
\end{definition}
Let us verify that this actually captures a double smash product.
What we need is an equivalence $(A \wedge B) \wedge C \simeq
\bigwedge(A,B,C)$. The underlying map of this equivalence is described
in Table~\ref{tab:alpha} with constructors of $(A \wedge B) \wedge C$
on the left and the corresponding constructors of $\bigwedge(A,B,C)$
on the right. We remark that this correspondence only serves as an
informal sketch of the function---in practice, some simple
coherences are needed for the higher constructors to make it
well-typed.
\begin{table}
\centering
\begin{tabular}{ |c c c| }
  \hline
  $(A \wedge B) \wedge C$ & $\to$ & $\bigwedge(A,B,C)$ \\
  \hline
  $\star_\wedge$ & \squigmapsto & $\star_{2\wedge}$ \\
  $\langle \star_\wedge , c \rangle$ & \squigmapsto & $\star_{2\wedge}$ \\
  $\langle \langle a , b \rangle , c \rangle$ & \squigmapsto & $\langle a,b,c \rangle$ \\
  $\ap{\langle - , c \rangle}{\pushl{a}}$ & \squigmapsto & $\pushone{a,c}$ \\
  $\ap{\langle - , c \rangle}{\pushr{b}}$ & \squigmapsto & $\pushzero{b,c}$ \\
  $\apap{\langle - , c \rangle}{\pushlr}$ & \squigmapsto & $\pushzeroone{c}$ \\
  $\pushl{\star_\wedge}$ & \squigmapsto & $\refl$ \\
  $\pushll{\langle a , b \rangle}$ & \squigmapsto & $\pushtwo{a,b}$ \\
  $\ap{\pushll}{\pushl{a}}$ & \squigmapsto & $\pushonetwo{a}$ \\
  $\ap{\pushll}{\pushr{b}}$ & \squigmapsto & $\pushzerotwo{b}$ \\
  $\apap{\pushll}{\pushlr}$ & \squigmapsto & $\mathsf{push}_{0,1,2}$ \\
  $\pushr{c}$ & \squigmapsto & $\refl$ \\
  $\pushlr$ & \squigmapsto & $\refl$ \\
  \hline
\end{tabular}
\caption{$(A \wedge B) \wedge C$ vs. $\bigwedge(A,B,C)$}
\label{tab:alpha}
\end{table}
Verifying that this map indeed defines an equivalence is somewhat
laborious but direct; the interested reader is referred to the computer formalisation. We get the associativity of the smash product as
a consequence.
\begin{proposition}
  There is a pointed equivalence $\alpha_{A,B,C} : (A \wedge B) \wedge C \simeq_\star A \wedge (B \wedge C)$
\end{proposition}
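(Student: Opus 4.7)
The plan is to prove the proposition by factoring the desired equivalence through the symmetric intermediate HIT $\bigwedge(A,B,C)$. First, I would formally construct the map $f : (A \wedge B) \wedge C \to \bigwedge(A,B,C)$ sketched in Table~\ref{tab:alpha}. This is carried out by nested HIT-induction: the outer induction on $(A \wedge B) \wedge C$ reduces to supplying, for each $c : C$, a map $A \wedge B \to \bigwedge(A,B,C)$, which in turn is built by a further HIT-induction on $A \wedge B$; one then closes up with the data for $\pushr{c}$ and $\pushlr$. The table glosses over some small coherences, so the real work here is filling in the 2-dimensional cases (the $\apap{-}{-}$ rows and the case for $\pushlr$) with path algebra that actually type-checks.

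Second, I would construct an inverse $g : \bigwedge(A,B,C) \to (A \wedge B) \wedge C$ by a single HIT-induction on $\bigwedge(A,B,C)$, essentially reading Table~\ref{tab:alpha} backwards: $\star_{2\wedge}$ goes to $\star_\wedge$, $\langle a,b,c\rangle$ goes to $\langle \langle a,b\rangle, c\rangle$, the $\pushzero{}, \pushone{}, \pushtwo{}$ constructors go to $\ap{\langle -,c\rangle}{\pushr{b}}$, $\ap{\langle -,c\rangle}{\pushl{a}}$ and $\pushll{\langle a,b\rangle}$ respectively, and the three 2-cells together with $\mathsf{push}_{0,1,2}$ are handled by the corresponding coherences involving $\pushlr$. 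I would then verify $g \circ f \sim \mathsf{id}$ and $f \circ g \sim \mathsf{id}$ by HIT-induction on the respective domains. For both round-trips, the point cases compute definitionally and the 1-dimensional path cases reduce to simple groupoid lemmas; the real content is again in the top-dimensional coherence cases.

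To finish the proposition, I would observe that $\bigwedge(A,B,C)$ is manifestly symmetric under the interchange of the outer factors: swapping the roles of the indices $0$ and $2$, and swapping the pairs $(A,B)$ and $(B,C)$ in Table~\ref{tab:alpha}, yields, by an entirely analogous construction, a pointed equivalence $A \wedge (B \wedge C) \simeq_\star \bigwedge(A,B,C)$. The associator $\alpha_{A,B,C}$ is then the composite of the first equivalence with the inverse of the second. Pointedness holds by $\refl$ at every step since every distinguished basepoint is sent to a distinguished basepoint by construction.

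The main obstacle, as expected, is the 2-dimensional coherence cases. Concretely, the entries $\apap{\langle -, c\rangle}{\pushlr}$, $\apap{\pushll}{\pushlr}$ and the translated cases in $g$ and in the two round-trips all require filling cubes whose boundaries involve compositions of $\pushl{}, \pushr{}$ and $\pushlr$; without cubical machinery these are nontrivial to massage into the expected shape. In \CubicalAgda they can be handled directly, but the bookkeeping is precisely the phenomenon that motivates the heuristic developed in Section~\ref{sec:heuristic} — so here I would perform the calculation honestly, in preparation for the later statement that tames it.
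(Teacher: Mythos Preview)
Your overall strategy---factor through the auxiliary HIT $\bigwedge(A,B,C)$ and use its evident symmetry---is exactly the paper's approach, and your description of constructing and inverting the map of Table~\ref{tab:alpha} is accurate.

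The one point where you diverge from the paper, and where your argument is slightly imprecise, is the final step. You claim that ``swapping the roles of the indices $0$ and $2$'' in Table~\ref{tab:alpha} yields directly an equivalence $A \wedge (B \wedge C) \simeq_\star \bigwedge(A,B,C)$. But Table~\ref{tab:alpha} is specifically adapted to the \emph{left}-bracketed shape $(X \wedge Y)\wedge Z$; reindexing it does not produce a map out of a right-bracketed smash. You would either need to write down a genuinely new (mirror) table for $A\wedge(B\wedge C)$, or pass through commutativity. The paper does the latter, and more economically than your sketch suggests: it uses the \emph{cyclic} permutation $\bigwedge(A,B,C)\simeq\bigwedge(B,C,A)$, reapplies Table~\ref{tab:alpha} verbatim with arguments $(B,C,A)$ to land in $(B\wedge C)\wedge A$, and then invokes Proposition~\ref{prop:smash-comm} once to reach $A\wedge(B\wedge C)$. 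This avoids duplicating the Table~\ref{tab:alpha} construction and keeps every component pointed by $\refl$.
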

\begin{proof}
Observe that $\bigwedge(A,B,C)$ is trivially invariant under permutation of the arguments
in the sense that e.g. $\bigwedge(A,B,C) \simeq \bigwedge(B,C,A)$. This allows us to define $\alpha_{A,B,C}$ by the following composition of equivalences.
\begin{align*}
  (A \wedge B) \wedge C \simeq \bigwedge(A,B,C) \simeq \bigwedge(B,C,A) \simeq (B \wedge C) \wedge A \simeq A \wedge (B \wedge C)
\end{align*}
The fact that $\alpha_{A,B,C}$ is pointed holds by $\refl$.
\end{proof}

\section{The heuristic}
\label{sec:heuristic}
When working in HoTT, reasoning about functions defined over iterated smash products quickly
gets out of hand. For instance, in order to verify the pentagon axiom, we
need to reason about functions on the form $((A \wedge B) \wedge C)
\wedge D \to E$. To prove an equality of two such functions $f$ and $g$,
we have to construct, for instance, a dependent path
\begin{align}\label{eq:annoying}
\mathsf{ap}_{\mathsf{ap}_{\mathsf{ap}_{f}\circ \pushll}\circ {\pushll}}{(\pushl{a})}
\longsquiggly
\mathsf{ap}_{\mathsf{ap}_{\mathsf{ap}_{g}\circ \pushll}\circ {\pushll}}{(\pushl{a})}
\end{align}
which boils down to filling a $4$-dimensional cube with highly
non-trivial sides. This is often completely unmanageable in practice.
The best thing we can hope for is that these types of coherence
problems are, in some sense, automatic. This was, in fact, suggested
by~\citet{Brunerie16}, but was never proved nor in any way made formal. In
this section, we will see that this, in fact, is the case.

The first troublesome part of verifying equalities of functions
defined over smash products is the $\pushlr$ constructor. Fortunately,
we do not have to deal with it. Let us denote by $\smashs{A}{B}$ the exact same HIT as $A \wedge B$ but with the $\pushlr$ constructor removed. In other words, it is the following pushout.
\[\begin{tikzcd}[ampersand replacement=\&]
	{A + B} \& {A \times B} \\
	1 \& {\smashs{A}{B}}
	\arrow[from=1-1, to=1-2]
	\arrow[from=1-2, to=2-2]
	\arrow[from=1-1, to=2-1]
	\arrow[from=2-1, to=2-2]
	\arrow["\lrcorner"{anchor=center, pos=0.125, rotate=180}, draw=none, from=2-2, to=1-1]
\end{tikzcd}\]
Let $i$ be the obvious map $\smashs{A}{B}\to A \wedge B$.
\begin{lemma}\label{lem:equaliser}
  For any two maps $f,g : A \wedge B \to C$ satisfying $f\circ i = g \circ i$, we have that $f = g$.
\end{lemma}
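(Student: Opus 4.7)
The plan is to apply function extensionality to reduce the goal to a pointwise homotopy $\tilde{H} : \forall x : A \wedge B,\ f(x) = g(x)$, then construct $\tilde{H}$ by $\wedge$-induction on $x$. From the hypothesis and happly one obtains a homotopy $H : \forall y : \smashs{A}{B},\ f(i(y)) = g(i(y))$. Since $i$ sends each non-$\pushlr$ constructor of $\smashs{A}{B}$ to its counterpart in $A \wedge B$, the values of $H$ together with $\ap{H}{-}$ directly supply the data for the $\star_\wedge$, $\langle a, b\rangle$, $\pushl{a}$, and $\pushr{b}$ cases: use $H(\star_\wedge)$ and $H\langle a,b\rangle$ for the point cases, and $\ap{H}{\pushl{a}}$ and $\ap{H}{\pushr{b}}$ as the dependent squares for the 1-path cases.

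The remaining obligation is the $\pushlr$ case, which is the core of the lemma. By $\wedge$-induction this amounts to producing a dependent 2-cell over $\pushlr$ --- equivalently, a cube in $C$ whose six faces are fully determined by the data in hand: the squares $\apap{f}{\pushlr}$ and $\apap{g}{\pushlr}$ (each of which has two reflexive sides, since the 2-cell $\pushlr$ has degenerate corners) on one pair of opposite faces, the squares $\ap{H}{\pushl{\star_A}}$ and $\ap{H}{\pushr{\star_B}}$ on a second pair, and two degenerate squares at $H\langle \star_A, \star_B\rangle$ and $H(\star_\wedge)$ on the third. Constructing this cube is the main technical obstacle.

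To build it, I would work cubically and exploit the degeneracies: two of the six faces are trivial, and two more have two reflexive sides each, so the filling problem reduces to a two-dimensional coherence problem in a specific loop space of $C$. The strategy is then to assemble an explicit filler out of the four non-trivial faces $\ap{H}{\pushl{\star_A}}$, $\ap{H}{\pushr{\star_B}}$, $\apap{f}{\pushlr}$, and $\apap{g}{\pushlr}$ via Kan composition. The delicate part is the careful alignment of the boundary so that the composition type-checks and its induced boundary matches the six prescribed faces on the nose; each of the four non-degenerate faces mentions both $f$ and $g$ data (via $H$ and via the $\pushlr$ action), and verifying compatibility on shared edges is precisely the combinatorial-geometric bookkeeping that the paper's heuristic is designed to manage. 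If the direct cubical assembly becomes intractable, a fallback would be to re-view $A \wedge B$ as a 2-cell attachment to $\smashs{A}{B}$ and lift $H$ through the resulting pushout universal property, where the coherence collapses to a single equation comparing the $\pushlr$-actions of $f$ and $g$ after conjugation by $H$.
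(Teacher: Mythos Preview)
Your approach has a genuine gap at the crucial step. You propose to define $\tilde{H}$ on all constructors except $\pushlr$ by reusing the data of $H$ unchanged, and then to fill the resulting cube over $\pushlr$. But that cube is \emph{not} in general fillable: two of its faces are $\apap{f}{\pushlr}$ and $\apap{g}{\pushlr}$, and these are not constrained in any way by the hypothesis $H$, since $\pushlr$ is precisely the constructor absent from $\smashs{A}{B}$. A concrete obstruction: take $A = B = 1$, so that $\smashs{A}{B} \simeq S^1$ and $A \wedge B$ is contractible, and let $C = S^2$ with $f = g$ constant at the basepoint. A homotopy $H : f\circ i \sim g\circ i$ then amounts to two loops $h,p$ in $S^2$ together with two \emph{independent} paths $h_l, h_r : h = p$, and your cube-filling problem reduces to proving $h_l = h_r$ --- which fails since $\pi_2(S^2) \neq 0$. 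Kan composition cannot rescue this: it produces a missing face of an open box, it does not fill a cube all six of whose faces are already prescribed. Your fallback has the same defect: the ``single equation comparing the $\pushlr$-actions of $f$ and $g$ after conjugation by $H$'' is exactly the unfillable-cube condition in another guise.

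The paper's proof avoids this by \emph{not} extending $H$ verbatim. It keeps $p$, $h$, and $h_l$ from $H$ but replaces $h_r$ by a modified square $h_r'(b)$, namely $h_r(b)$ composed with a fixed correction square. That correction is itself obtained by Kan filling from $h_l(\star_A)$, $h_r(\star_B)$, $\apap{f}{\pushlr}$ and $\apap{g}{\pushlr}$, so it absorbs exactly the obstruction above. With this altered $h_r'$, the final cube over $\pushlr$ can be generalised and discharged by path induction. The point you are missing is that the lemma only asserts the \emph{existence} of some homotopy $f \sim g$, not that the given $H$ extends; one must deform $H$ to make the extension possible.
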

\begin{proof}
  The antecedent of the statement provides us with
  \begin{itemize}
  \item a path $p : f(\star_\wedge) = g (\star_\wedge)$,
  \item a homotopy $h : ((a,b) : A \times B) \to f \langle a,b\rangle = g \langle a,b \rangle$,
  \item for each $a : A$, a filler $h_l(a)$ of the square
\[\begin{tikzcd}[ampersand replacement=\&]
	{g\langle a,\star_B\rangle} \&\& {g(\star_\wedge)} \\
	{f\langle a,\star_B\rangle} \&\& {f(\star_\wedge)}
	\arrow["{h(a,\star_B)}", from=2-1, to=1-1]
	\arrow["{\ap{g}{\pushl{a}} }", from=1-1, to=1-3]
	\arrow["p"', from=2-3, to=1-3]
	\arrow["{\ap{f}{\pushl{a}} }"', from=2-1, to=2-3]
\end{tikzcd}\]
\item for each $b : B$, a filler $h_r(b)$ of the square
  \[\begin{tikzcd}[ampersand replacement=\&]
	{g\langle \star_A,b\rangle} \&\& {g(\star_\wedge)} \\
	{f\langle \star_A,b\rangle} \&\& {f(\star_\wedge)}
	\arrow["{h(\star_A,b)}", from=2-1, to=1-1]
	\arrow["{\ap{g}{\pushr{b}} }", from=1-1, to=1-3]
	\arrow["p"', from=2-3, to=1-3]
	\arrow["{\ap{f}{\pushr{b}} }"', from=2-1, to=2-3]
\end{tikzcd}\]
  \end{itemize}
  To prove that $f = g$, we need to provide $p',h',h_l',h_r'$ of the
  same types as above, as well as a filler $h'_{lr}$ of the cube
\[\begin{tikzcd}[ampersand replacement=\&]
	{g\langle \star_A,\star_B\rangle} \&\& {g(\star_\wedge)} \\
	\& {g\langle \star_A,\star_B\rangle} \&\& {g(\star_\wedge)} \\
	{f\langle \star_A,\star_B\rangle} \& {} \& {f(\star_\wedge)} \\
	\& {f\langle \star_A,\star_B\rangle} \&\& {f(\star_\wedge)}
	\arrow[Rightarrow, no head, from=4-2, to=3-1]
	\arrow["{\mathsf{ap}_f(\pushl{\star_A})}"{description}, from=4-2, to=4-4]
	\arrow[Rightarrow, no head, from=4-4, to=3-3]
	\arrow[""{name=0, anchor=center, inner sep=0}, "{\mathsf{ap}_f(\pushr{\star_A})}"{description}, from=3-1, to=3-3]
	\arrow[from=3-3, to=1-3]
	\arrow[from=3-1, to=1-1]
	\arrow[from=4-4, to=2-4]
	\arrow[Rightarrow, no head, from=2-2, to=1-1]
	\arrow["{\mathsf{ap}_g(\pushr{\star_A})}"{description}, from=1-1, to=1-3]
	\arrow[Rightarrow, no head, from=2-4, to=1-3]
	\arrow["{\mathsf{ap}_g(\pushl{\star_A})}"{description}, from=2-2, to=2-4]
	\arrow[shorten <=3pt, from=3-2, to=2-2]
	\arrow[shorten >=5pt, no head, from=4-2, to=0]
\end{tikzcd}\]
where the top and bottom squares are given respectively by
$\mathsf{ap}_{\mathsf{ap}_{g}}(\pushlr)$ and
$\mathsf{ap}_{\mathsf{ap}_{f}}(\pushlr)$, the left- and right-hand side
respectively by $\refl_{h'(\star_A,\star_B)}$ and $\refl_{p'}$ and the front and back respectively by $h_l'(\star_A)$ and $h_r'(\star_B)$.

We set $p' := p$, $h' := h$ and $h_l' := h_l$. For $h_r'$, however, we
need to make an alteration. We construct it as the following composite square
\[\begin{tikzcd}[ampersand replacement=\&]
	{g\langle \star_A,b\rangle} \&\& {g(\star_\wedge)} \& {g(\star_\wedge)} \\
	{f\langle \star_A,b\rangle} \&\& {f(\star_\wedge)} \& {f(\star_\wedge)}
	\arrow["{h(\star_A,b)}", from=2-1, to=1-1]
	\arrow["{\ap{g}{\pushr{b}} }", from=1-1, to=1-3]
	\arrow["p"', from=2-3, to=1-3]
	\arrow["{\ap{f}{\pushr{b}} }"', from=2-1, to=2-3]
	\arrow[Rightarrow, no head, from=1-3, to=1-4]
	\arrow[Rightarrow, no head, from=2-3, to=2-4]
	\arrow["p"', from=2-4, to=1-4]
\end{tikzcd}\]
where the square on the left is $h_r(b)$ and the square on the right is the lid of the cube
\[\begin{tikzcd}[ampersand replacement=\&]
	{g(\star_\wedge)} \&\& {g(\star_\wedge)} \\
	\& {f(\star_\wedge)} \&\& {f(\star_\wedge)} \\
	{g\langle\star_A,\star_B\rangle} \&\& {g\langle\star_A,\star_B\rangle} \\
	\& {f\langle\star_A,\star_B\rangle} \&\& {f\langle\star_A,\star_B\rangle}
	\arrow[Rightarrow, no head, from=4-2, to=4-4]
	\arrow["{h(\star_A,\star_B)}"{description}, from=4-2, to=3-1]
	\arrow[Rightarrow, no head, from=3-1, to=3-3]
	\arrow["{\ap{g}{\pushl{\star_A}}}"{description, pos=0.7}, from=3-1, to=1-1]
	\arrow[Rightarrow, no head, from=1-1, to=1-3]
	\arrow["{\ap{g}{\pushr{\star_B}}}"{description, pos=0.7}, from=3-3, to=1-3]
	\arrow["p"{description}, from=2-2, to=1-1]
	\arrow[Rightarrow, no head, from=2-2, to=2-4]
	\arrow["p"{description}, from=2-4, to=1-3]
	\arrow["{{\ap{f}{\pushl{\star_A}}}}"{description, pos=0.7}, from=4-2, to=2-2]
	\arrow["{\ap{f}{\pushr{\star_B}}}"{description, pos=0.7}, from=4-4, to=2-4]
	\arrow["{h(\star_A,\star_B)}"{description}, from=4-4, to=3-3]
\end{tikzcd}\]
with $h_l(\star_A)$ and $h_r(\star_B)$ as left- and right-hand sides, the action of $f$ and $g$ on $\pushlr$ on the front and
back, and $\refl_{h(\star_A,\star_B)}$ on the bottom. One
can now easily construct the filler $h'_{lr}$ by generalising the
cubes involved and applying path induction.
\end{proof}
Lemma~\ref{lem:equaliser} is useful but does not get us all the way. Complicated
paths like~\eqref{eq:annoying} still need to be constructed,
regardless of what happens with the $\pushlr$ constructor. To strengthen the principle, we will need to introduce the concept of \emph{homogeneous types}.
\begin{definition}
  A pointed type $A$ is homogeneous if for any $a : A$, there is a pointed equivalence
  $(A,\star_A) \simeq_\star (A,a)$.
\end{definition}
The usefulness of homogeneous types is showcased by the following
lemma which was first conjectured for Eilenberg-MacLane spaces in work
leading up to~\cite{BLM22} and later proved and generalised by
Cavallo (and later further generalised by~Buchholtz, Christensen,
G. Taxerås Flaten and Rijke~(\citeyear{taxeras})).
\begin{lemma}[Cavallo]\label{lem:evan}
  Let $f ,g : A \to_\star B$ and let $B$ be homogeneous. If $f = g$ as plain
  functions, then $f = g$ as pointed functions.
\end{lemma}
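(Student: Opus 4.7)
The plan is to proceed in three steps: path induction to reduce to the case of equal underlying functions, an unfolding of the pointed equality type, and a construction using the homogeneity structure on $B$. Given the assumed path $p : f_0 = g_0$ in the type of unpointed functions $A \to B$, we first perform path induction on $p$, generalising over the pointedness proof $\star_g$. This reduces the goal to showing that for any $f_0 : A \to B$ and any two witnesses $\star_1, \star_2 : f_0(\star_A) = \star_B$, we have $(f_0, \star_1) = (f_0, \star_2)$ in $A \to_\star B$.

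Unfolding the definition of equality in $A \to_\star B$, we need to produce a self-homotopy $h : f_0 = f_0$ in $A \to B$ together with a $2$-cell relating $h(\star_A)$, $\star_1$, and $\star_2$. Setting $\omega := \star_1 \cdot \star_2^{-1} : f_0(\star_A) = f_0(\star_A)$, it suffices to exhibit $h$ with $h(\star_A) = \omega$. Here is where homogeneity enters: for every $b : B$, let $\varepsilon_b : (B, \star_B) \simeq_\star (B, b)$ be the pointed equivalence supplied by the homogeneity datum. The key construction is a map that extends any loop at $f_0(\star_A)$ to a self-homotopy of $f_0$, by first pulling the loop back along $\varepsilon_{f_0(\star_A)}$ to a loop at $\star_B$, and then pushing it forward along each $\varepsilon_{f_0(a)}$ to obtain a loop at $f_0(a)$ depending continuously on $a$. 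Applying this to $\omega$ yields the required $h$, and the remaining coherence follows from the naturality of the pulling-back and pushing-forward operations.

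The hard part will be verifying that the round-trip "pull back then push forward" at the point $f_0(\star_A)$ itself recovers $\omega$ on the nose (or at least up to a path compatible with the higher coherence of the pointed equality). This typically reduces to arranging that $\varepsilon_{\star_B}$ is (homotopic to) the identity equivalence---a mild strengthening of the definition of homogeneity that holds in all examples of interest---after which the compatibility can be obtained by a direct path-algebra computation.
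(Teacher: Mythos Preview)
The paper does not prove this lemma; it merely attributes it to Cavallo. So there is no ``paper's proof'' to compare against, and your outline must be judged on its own merits.

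Your first two steps are exactly right, and your construction of the self-homotopy $h$ is the standard one. But the final paragraph misidentifies the difficulty. In your own construction, the round-trip at $a = \star_A$ is
\[
h(\star_A) \;=\; \Omega(\varepsilon_{f_0(\star_A)})\bigl(\Omega(\varepsilon_{f_0(\star_A)})^{-1}(\omega)\bigr),
\]
which is equal to $\omega$ simply because $\Omega(\varepsilon_{f_0(\star_A)})$ is an equivalence and you are composing it with its own inverse. This uses nothing beyond the fact that $\Omega$ sends pointed equivalences to equivalences of loop spaces. In particular, the equivalence $\varepsilon_{\star_B}$ never appears in your construction, so no hypothesis on it can be relevant.

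Even if some argument did require $\varepsilon_{\star_B}$ to be the identity, this would not be a genuine strengthening of homogeneity: given any homogeneity datum $(\varepsilon_b)_{b:B}$, the renormalised family $\varepsilon'_b := \varepsilon_b \circ \varepsilon_{\star_B}^{-1}$ is again a homogeneity datum and satisfies $\varepsilon'_{\star_B} = \mathsf{id}$ up to a canonical path. So the ``mild strengthening'' you propose is always available for free, but in your argument it is not even needed. Drop the caveat in the last paragraph and the proof is complete.
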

The same lemma holds for bi-pointed functions $f,g : A \to_\star (B
\to_\star C)$, since the type $(B \to_\star C)$ is homogeneous if $C$
is. This gives a corresponding principle for maps defined over smash
products via the adjunction
$(A \wedge B \to_\star C) \simeq (A \to_\star (B
\to_\star C))$.
\begin{lemma}\label{lem:evan-smash}
  Let $f ,g : A \wedge B \to_\star C$ and let $C$ be homogeneous. If
  $f\langle a,b \rangle = g \langle a , b \rangle$ for all $a : A$ and
  $b : B$, we obtain an equality of pointed functions $f = g$.
\end{lemma}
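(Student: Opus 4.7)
The plan is to reduce the statement to two applications of Lemma~\ref{lem:evan} via the adjunction $(A \wedge B \to_\star C) \simeq (A \to_\star (B \to_\star C))$. I only need this adjunction as an equivalence of underlying types---not as a pointed natural equivalence---so I sidestep the technical gap noted in \citet{FlorisPhd}. Writing $\tilde f, \tilde g : A \to_\star (B \to_\star C)$ for the currying of $f$ and $g$, it therefore suffices to prove $\tilde f = \tilde g$ as pointed maps.

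Next, I use that $(B \to_\star C)$ is homogeneous whenever $C$ is---the paper records this in the preceding paragraph, and it follows from homogeneity of $C$ by post-composition with a chosen pointed self-equivalence. Applying Lemma~\ref{lem:evan} with codomain $(B \to_\star C)$, it suffices to show $\tilde f = \tilde g$ as plain functions $A \to (B \to_\star C)$; by function extensionality this reduces to proving $\tilde f(a) = \tilde g(a)$ in $(B \to_\star C)$ for every $a : A$. Each of these is an equality of pointed maps into the homogeneous type $C$, so a second application of Lemma~\ref{lem:evan} reduces it to $\tilde f(a) = \tilde g(a)$ as plain functions $B \to C$. A final invocation of function extensionality turns this into the hypothesis $f\langle a,b\rangle = g\langle a,b\rangle$ for all $b$.

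The only non-trivial piece of work is setting up the adjunction rigorously as an equivalence. The forward direction is currying, with the pointedness of $\tilde f(a) : B \to_\star C$ witnessed by $\ap{f}{\pushl{a}}$ concatenated with $\star_f$, and the pointedness of $\tilde f$ itself witnessed pointwise by $\ap{f}{\pushr{-}}$; the inverse is constructed by pushout-recursion on $A \wedge B$. Verifying mutual inverseness is direct and requires no higher coherences of the kind that would demand the heuristic of Section~\ref{sec:heuristic}, which is precisely why I expect no genuine obstacle in the argument---the whole proof is bookkeeping around two invocations of Cavallo's lemma bridged by function extensionality.
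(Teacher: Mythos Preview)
Your proposal is correct and follows essentially the same route as the paper: the paper derives the lemma from the adjunction $(A \wedge B \to_\star C) \simeq (A \to_\star (B \to_\star C))$ together with the fact that $(B \to_\star C)$ is homogeneous when $C$ is, then appeals to Lemma~\ref{lem:evan}. You simply unpack this into two explicit applications of Lemma~\ref{lem:evan} (one for the outer pointing and one at each $a$), which is exactly what the paper's one-line ``bi-pointed'' remark amounts to.
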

The following form is useful when dealing with non-pointed functions.
\begin{lemma}\label{lem:evan-smash-unpointed}
  Let $C$ be an arbitrary type and suppose that we have two functions $f ,g : A \wedge B \to C$ with $(C,f(\star_\wedge))$ homogeneous. If
  $f\langle a,b \rangle = g \langle a , b \rangle$ for all $a : A$ and
  $b : B$, then $f = g$.
\end{lemma}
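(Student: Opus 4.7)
The plan is to reduce to Lemma~\ref{lem:evan-smash} by equipping $C$ with the basepoint $c_0 := f(\star_\wedge)$, which makes $(C,c_0)$ homogeneous by assumption. Then $f$ can be regarded as a pointed function $\tilde f : A \wedge B \to_\star (C,c_0)$ with pointedness proof $\refl_{c_0}$. To view $g$ as a pointed function into the same space, I need a path $q : g(\star_\wedge) = c_0$, after which $\tilde g := (g, q) : A \wedge B \to_\star (C, c_0)$ has the same underlying map as $g$.

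The natural candidate for $q$ is the composite
\[ g(\star_\wedge) \;=\; g\langle \star_A,\star_B\rangle \;=\; f\langle \star_A,\star_B\rangle \;=\; f(\star_\wedge) \;=\; c_0, \]
where the outer two steps are $\ap{g}{\pushl{\star_A}}^{-1}$ and $\ap{f}{\pushl{\star_A}}$, and the middle step is the hypothesised pointwise equality instantiated at $(\star_A,\star_B)$. With $\tilde f$ and $\tilde g$ in hand, Lemma~\ref{lem:evan-smash} applies directly: the pointwise hypothesis $f\langle a,b\rangle = g\langle a,b\rangle$ is exactly what is needed to conclude $\tilde f = \tilde g$ as pointed functions. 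Projecting this equality onto the underlying maps yields $f = g$.

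There is no real obstacle; the only thing to keep track of is that the specific choice of $q$ is immaterial for the underlying-function conclusion, since equality of pointed functions is a $\Sigma$-type whose first projection is the plain function equality. One could equally well have used $\ap{g}{\pushr{\star_B}}^{-1}$ and $\ap{f}{\pushr{\star_B}}$ to build $q$, and the argument would go through unchanged.
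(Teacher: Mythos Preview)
Your proposal is correct and follows essentially the same approach as the paper: construct a path $g(\star_\wedge) = f(\star_\wedge)$ from the hypothesis and the $\pushll$ constructor, use it to view both $f$ and $g$ as pointed maps into $(C,f(\star_\wedge))$, invoke Lemma~\ref{lem:evan-smash}, and then forget the pointedness. The paper's proof is identical in substance, differing only in that it does not name the intermediate pointed functions $\tilde f$ and $\tilde g$ explicitly.
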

\begin{proof}
  Let $h(a,b) : f\langle a,b \rangle = g \langle a , b \rangle$.
  We know that $g(\star_\wedge) = f(\star_\wedge)$ by the composite path
  \[g(\star_\wedge) \xrightarrow{\ap{g}{\pushl{\star_A}}^{-1}} g\langle \star_A,\star_B \rangle \xrightarrow{h(\star_A,\star_B)^{-1}}{f\langle \star_A,\star_B \rangle} \xrightarrow{\ap{f}{\pushl{\star_A}}} f (\star_\wedge) \]
  Hence, we may view both $f$ and $g$ as pointed functions $A \wedge B \to_\star (C,f(\star_\wedge))$. Now, Lemma~\ref{lem:evan-smash} applies and, in particular, $f = g$ as plain functions. 
\end{proof}
If we could apply Lemma~\ref{lem:evan-smash}~or~\ref{lem:evan-smash-unpointed} when proving the pentagon identity, we would be done immediately. Unfortunately, none of the
types showing up in the statement of the pentagon identity are necessarily homogeneous. There is, however,
some use for the lemmas. Let us first make the following observation: given
two pointed functions $f,g: A \wedge B \to_\star C$ and a homotopy $h :
((a , b) : A \times B) \to f\langle a,b\rangle = g \langle a,b
\rangle$, we can define two functions $L_h : A \to \Omega(C)$ and $R_h
: B \to \Omega(C)$ in terms of $h$ and $\pushll$/$\pushrr$. The
obvious definition of these maps~(which we will spell out in Definition~\ref{def:LR}) will give us a version
of~Lemma~\ref{lem:equaliser} which tells us that if $L_h$ and $R_h$ are
constant, then $f = g$ as plain functions. Now, if either $A$ or $B$ is another smash product, this
\emph{would} be a situation where Lemma~\ref{lem:evan-smash-unpointed} applies since
$\Omega(C)$ (and indeed any path type) is homogeneous. This suggests that Lemma~\ref{lem:evan-smash-unpointed} may be used our advantage when dealing with iterated smash products. Let us try to spell this out more clearly. In fact, we can state this idea without any pointedness assumptions by simply replacing $\Omega(C)$ with the path type $f(\star_\wedge) = g(\star_\wedge)$ which also is homogeneous (and agrees with $\Omega(C)$ whenever $f$ and $g$ are pointed).
\begin{definition}\label{def:LR}
  Let $f,g : A \wedge B \to C$ and let $h : ((a , b) : A \times B) \to f\smashinc{a}{b} = g\smashinc{a}{b}$. This induces, in particular, functions $L_h : A \to f(\star_\wedge) = g(\star_\wedge)$ and $R_h : B \to f(\star_\wedge) = g(\star_\wedge)$ defined by
  \begin{align*}
    L_h(a) &= \ap{f}{\pushl{a}}^{-1} \cdot h(a,\star_B) \cdot \ap{g}{\pushl{a}} \\
    R_h(b) &= \ap{f}{\pushr{b}}^{-1} \cdot h(\star_A,b) \cdot \ap{g}{\pushr{b}} 
  \end{align*}
\end{definition}
We may use $L_h$ and $R_h$ to give a compact induction principle for identities $f = g$. The
following lemma is a direct rewriting of Lemma~\ref{lem:equaliser}.
\begin{lemma}\label{lem:main}
  Let $f ,g: A \wedge B \to C$. The following data yields an equality $f = g$.
  \begin{itemize}
  \item A homotopy $h : ((a , b) : A \times B) \to f\smashinc{a}{b} = g\smashinc{a}{b}$,
  \item equalities $L_h = \mathsf{const}_{L_h(\star_A)}$ and $R_h = \mathsf{const}_{R_h(\star_B)}$.
  \end{itemize}
  If $C$ is a pointed type, $f$ and $g$ are pointed functions and an equality $f = g$ of pointed functions is desired, a further coherence $\star_f = L_h(\star_A) \cdot \star_g$ is required.
\end{lemma}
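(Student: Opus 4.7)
The plan is to reduce the lemma to Lemma~\ref{lem:equaliser}. Since $\smashs{A}{B}$ is a pushout without the $\pushlr$ constructor, producing a homotopy $f \circ i = g \circ i$ unfolds by induction into four pieces of data: a basepoint path $p : f(\star_\wedge) = g(\star_\wedge)$, a pair-indexed homotopy on $\smashinc{a}{b}$, a family of square fillers for $\pushll$, and another family for $\pushrr$, where both families use the \emph{same} $p$ on their basepoint edge.

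The natural strategy is to take $p := L_h(\star_A)$ and the pair-indexed homotopy to be $h$. The $\pushll$-squares will then come almost for free from $\sigma_L$: by the definition of $L_h$ in Definition~\ref{def:LR}, the identity $\sigma_L(a) : L_h(a) = L_h(\star_A) = p$ rearranges, by routine groupoid manipulation, into exactly the required square at $\pushl{a}$. This first step is essentially bookkeeping.

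The main obstacle is the $\pushrr$-squares. The hypothesis $\sigma_R(b)$ directly produces a filler whose basepoint side is $R_h(\star_B)$, not $p$. These two constants are a priori distinct elements of $f(\star_\wedge) = g(\star_\wedge)$, but they are forced to agree through $\pushlr$: the 2-paths $\apap{f}{\pushlr}$ and $\apap{g}{\pushlr}$ identify $\ap{f}{\pushl{\star_A}}$ with $\ap{f}{\pushr{\star_B}}$ and likewise for $g$, and substituting these into the definitions of $L_h(\star_A)$ and $R_h(\star_B)$ yields a canonical 2-path between them. Splicing this 2-path into $\sigma_R(b)$ gives a square compatible with $p$. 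Verifying that the splicing produces a well-typed filler is the technical heart of the argument; fortunately, the bridging 2-path is local to the point $(\star_A, \star_B)$ and independent of $b$, so the per-point analysis should remain tractable and the bulk of the work reduces to a path-induction argument akin to the one already used in Lemma~\ref{lem:equaliser}.

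For the pointed case, a pointed homotopy additionally requires compatibility at the basepoint between $p$ and the pointing proofs $\star_f, \star_g$. The extra hypothesis $\star_f = L_h(\star_A) \cdot \star_g$ is exactly this compatibility, and it upgrades the plain homotopy constructed above to a pointed one.
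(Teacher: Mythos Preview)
Your proposal is correct and matches the paper's approach: the paper simply states that Lemma~\ref{lem:main} is ``a direct rewriting of Lemma~\ref{lem:equaliser}'' without further detail, and your plan spells out exactly that rewriting, including the one genuine wrinkle---that the hypotheses hand you $h_l$-squares with side $L_h(\star_A)$ and $h_r$-squares with side $R_h(\star_B)$, which you correctly bridge via $\apap{f}{\pushlr}$ and $\apap{g}{\pushlr}$ before feeding the data into Lemma~\ref{lem:equaliser}. The pointed upgrade is also handled correctly, since the homotopy produced by Lemma~\ref{lem:equaliser} has value $p' = p = L_h(\star_A)$ at $\star_\wedge$.
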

The key idea now is, we stress again, to use Lemmas~\ref{lem:main}~and~\ref{lem:evan-smash}/\ref{lem:evan-smash-unpointed} iteratively to prove equalities of functions defined over iterated smash products. As a first example, let us consider the problem of proving an equality of functions $f = g$ where $f ,g : (A \wedge B) \wedge C \to D$. The following lemma provides a good estimate of the minimal amount of data needed to construct such an equality.
\begin{lemma}\label{lem:3-eq}
  For two functions $f ,g : (A \wedge B) \wedge C \to D$, the
  following data gives an equality $f = g$.
  \begin{enumerate}[label=(\roman*)]
    \item \label{item:homotopy} A homotopy $h : ((a,b,c) : A \times B \times C) \to f
      \langle a,b,c \rangle = g \langle a,b,c \rangle$,
    \item \label{item:AC-fill} for every pair $(a,c) : A \times C$, a filler of the square
\[\begin{tikzcd}[ampersand replacement=\&]
	{f\langle \star_A,\star_B,c\rangle} \&\& {g\langle \star_A,\star_B,c\rangle} \\
	{f\langle \star_\wedge,c\rangle} \&\& {g\langle \star_\wedge,c\rangle} \\
	{f\langle a,\star_B,c\rangle} \&\& {g\langle a,\star_B,c\rangle}
	\arrow["{h(a,\star_B,c)}"', from=3-1, to=3-3]
	\arrow["{\mathsf{ap}_{f\langle-,c\rangle}(\pushl{a})}", from=3-1, to=2-1]
	\arrow["{\mathsf{ap}_{g\langle-,c\rangle}(\pushl{a})}"', from=3-3, to=2-3]
	\arrow["{\mathsf{ap}_{f\langle-,c\rangle}(\pushl{\star_A})^{-1}}", from=2-1, to=1-1]
	\arrow["{h(\star_A,\star_B,c)}", from=1-1, to=1-3]
	\arrow["{\mathsf{ap}_{g\langle-,c\rangle}(\pushl{\star_A})^{-1}}"', from=2-3, to=1-3]
\end{tikzcd}\]
\item \label{item:BC-fill} for every pair $(b,c) : B \times C$, a filler of the square
\[\begin{tikzcd}[ampersand replacement=\&]
	{f\langle \star_A,\star_B,c \rangle} \&\& {g\langle \star_A,\star_B,c \rangle} \\
	{f\langle \star_\wedge,c\rangle} \&\& {g\langle \star_\wedge,c\rangle} \\
	{f\langle \star_A,b,c\rangle} \&\& {g\langle \star_A,b,c\rangle}
	\arrow["{h(\star_A,b,c)}"', from=3-1, to=3-3]
	\arrow["{\mathsf{ap}_{f\langle-,c\rangle}(\pushr{b})}", from=3-1, to=2-1]
	\arrow["{\mathsf{ap}_{g\langle-,c\rangle}(\pushr{b})}"', from=3-3, to=2-3]
	\arrow["{\mathsf{ap}_{f\langle-,c\rangle}(\pushr{\star_B})^{-1}}", from=2-1, to=1-1]
	\arrow["{\mathsf{ap}_{g\langle-,c\rangle}(\pushr{\star_B})^{-1}}"', from=2-3, to=1-3]
	\arrow["{h(\star_A,\star_B,c)}", from=1-1, to=1-3]
\end{tikzcd}\]
    \item \label{item:AB-fill} for every pair $(a,b) : A \times B$, a filler of the square
\[\begin{tikzcd}[ampersand replacement=\&]
	{f\langle \star_A,\star_B,\star_C\rangle} \&\& {g\langle \star_A,\star_B,\star_C\rangle} \\
	{f\langle \star_\wedge,\star_C \rangle} \&\& {g\langle \star_\wedge,\star_C \rangle} \\
	{f(\star_\wedge)} \&\& {g(\star_\wedge)} \\
	{f\langle a,b,\star_C\rangle} \&\& {g\langle a,b,\star_C\rangle}
	\arrow["{\mathsf{ap}_{f}(\pushll \langle a,b\rangle)}", from=4-1, to=3-1]
	\arrow["{\mathsf{ap}_{f}(\pushl{\star_\wedge}^{-1})}", from=3-1, to=2-1]
	\arrow["{\mathsf{ap}_{f\langle - , \star_C\rangle}(\pushr{\star_B}^{-1}))}", from=2-1, to=1-1]
	\arrow["{h(\star_A,\star_B,\star_C)}", from=1-1, to=1-3]
	\arrow["{\mathsf{ap}_{g\langle - , \star_C\rangle}(\pushr{\star_B}^{-1}))}"', from=2-3, to=1-3]
	\arrow["{\mathsf{ap}_{g}(\pushl{\star_\wedge}^{-1})}"', from=3-3, to=2-3]
	\arrow["{h(a,b,\star_C)}"', from=4-1, to=4-3]
	\arrow["{\mathsf{ap}_{g}(\pushll \langle a,b\rangle)}"', from=4-3, to=3-3]
\end{tikzcd}\]
\item \label{item:C-fill} for every point $c : C$, a filler of the square
\[\begin{tikzcd}[ampersand replacement=\&]
	{f\langle\star_A,\star_B,\star_C\rangle} \&\& {g\langle\star_A,\star_B,\star_C\rangle} \\
	{f\langle\star_\wedge, \star_C\rangle} \&\& {g\langle\star_\wedge, \star_C\rangle} \\
	{f(\star_\wedge)} \&\& {g(\star_\wedge)} \\
	{f\langle\star_\wedge, c\rangle} \&\& {g\langle\star_\wedge, c\rangle} \\
	{f\langle\star_A,\star_B,c\rangle} \&\& {g\langle\star_A,\star_B,c\rangle}
	\arrow["{h(\star_A,\star_B,c)}"', from=5-1, to=5-3]
	\arrow["{\mathsf{ap}_{f\langle -,c\rangle}(\pushr{\star_B})}", from=5-1, to=4-1]
	\arrow["{\mathsf{ap}_{g\langle -,c\rangle}(\pushr{\star_B})}"', from=5-3, to=4-3]
	\arrow["{\mathsf{ap}_f(\pushr{c})}", from=4-1, to=3-1]
	\arrow["{\mathsf{ap}_g(\pushr{c})}"', from=4-3, to=3-3]
	\arrow["{h(\star_A,\star_B,\star_C)}", from=1-1, to=1-3]
	\arrow["{\mathsf{ap}_f(\pushl{\star_\wedge})^{-1}}", from=3-1, to=2-1]
	\arrow["{\mathsf{ap}_{f\langle -,\star_C\rangle}(\pushr{\star_B})^{-1}}", from=2-1, to=1-1]
	\arrow["{\mathsf{ap}_g(\pushl{\star_\wedge})^{-1}}"', from=3-3, to=2-3]
	\arrow["{\mathsf{ap}_{g\langle -,\star_C\rangle}(\pushr{\star_B})^{-1}}"', from=2-3, to=1-3]
\end{tikzcd}\]
  \end{enumerate}
\end{lemma}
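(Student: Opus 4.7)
The plan is to apply Lemma~\ref{lem:main} iteratively, exploiting the fact that every path type is homogeneous so that Lemma~\ref{lem:evan-smash-unpointed} can be used at each intermediate stage. First, I would regard $(A \wedge B) \wedge C$ as a smash of $X := A \wedge B$ with $C$ and apply Lemma~\ref{lem:main} to $f$ and $g$. This reduces the goal $f = g$ to producing a homotopy $k : ((x,c) : X \times C) \to f\smashinc{x}{c} = g\smashinc{x}{c}$ together with proofs that the induced maps $L_k : X \to f(\star_\wedge) = g(\star_\wedge)$ and $R_k : C \to f(\star_\wedge) = g(\star_\wedge)$ are constant at their basepoint values.

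To produce $k$, I fix $c : C$ and construct $k(-,c) : X \to (\text{paths in } D)$ by applying Lemma~\ref{lem:main} a second time, now to the inner smash $A \wedge B$. On basic points I set $k(\smashinc{a}{b}, c) := h(a,b,c)$ using datum~\ref{item:homotopy} of the hypothesis. The two resulting constancy conditions $L_{h_c} = \mathsf{const}$ and $R_{h_c} = \mathsf{const}$ (where $h_c(a,b) := h(a,b,c)$) are precisely the square fillers supplied by items~\ref{item:AC-fill} and~\ref{item:BC-fill}, read as families uniform in $c$. This determines $k$ on all of $X \times C$, including at the outer basepoint $\star_\wedge$, where its value is forced by the proofs of Lemmas~\ref{lem:main} and~\ref{lem:equaliser}.

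For the remaining two constancy conditions, I exploit the homogeneity of the path type $f(\star_\wedge) = g(\star_\wedge)$. Since $L_k$ has domain $A \wedge B$, which is itself a smash, Lemma~\ref{lem:evan-smash-unpointed} reduces the constancy $L_k = \mathsf{const}_{L_k(\star_\wedge)}$ to the pointwise statement $L_k\smashinc{a}{b} = L_k(\star_\wedge)$, and unfolding the definitions of $L_k$ and $k$ gives exactly the square drawn in item~\ref{item:AB-fill}. The map $R_k$ has domain $C$, so its constancy is already a pointwise condition $R_k(c) = R_k(\star_C)$, and unfolding matches the square in item~\ref{item:C-fill}.

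The main obstacle is bookkeeping: I must verify that after unfolding the values of $k$ at the outer basepoint $\star_\wedge$---which are themselves computed by the inner application of Lemma~\ref{lem:main}---the resulting coherence squares are, up to straightforward path algebra, the ones drawn in items~\ref{item:AB-fill} and~\ref{item:C-fill}. The somewhat elongated shape of these two squares reflects precisely the lid-adjustment performed in the proof of Lemma~\ref{lem:equaliser} to reconcile the two independent constants $L_k(\star_\wedge)$ and $R_k(\star_C)$ along a shared endpoint; tracing these adjustments carefully is where the bulk of the detail work lives. Beyond that, no genuinely new higher-dimensional path algebra is required, since the troublesome $\pushlr$ constructor is already handled once and for all by Lemmas~\ref{lem:equaliser} and~\ref{lem:evan-smash-unpointed}.
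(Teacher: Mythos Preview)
Your proposal is correct and follows essentially the same approach as the paper: an outer application of Lemma~\ref{lem:main} to the smash $(A\wedge B)\wedge C$, an inner application of Lemma~\ref{lem:main} (for each fixed $c$) to build $k$, and then Lemma~\ref{lem:evan-smash-unpointed} to handle the constancy of $L_k$ over the inner smash. Your remarks about the bookkeeping at $\star_\wedge$ and the elongated shape of the squares in \ref{item:AB-fill} and \ref{item:C-fill} correctly anticipate the unfolding work that the paper glosses over in its sketch.
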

\begin{proof}[Proof sketch]
  Suppose we have the given data. We apply Lemma~\ref{lem:main} to $f$ and
  $g$. This breaks the proof up into two $2$ subgoals.
  \begin{itemize}
    \item First, we need to provide a homotopy $k : ((x , c) : (A
      \wedge B) \times C) \to f\langle x,c\rangle = g \langle x,c
      \rangle$. To construct $k$, we fix $c :C$ and note that we may now apply
      Lemma~\ref{lem:main} again: this time to the functions $f(-,c)$
      and $g(-,c)$. This gives us 2 new subgoals.
  \begin{itemize}
    \item First, we need a homotopy $((a , b , c) : A \times B \times
      C) \to f\langle a , b,c \rangle = g \langle a , b , c
      \rangle$. This is given by $h$.
    \item We finally need to show that $L_{h(-,c)}$ and $R_{h(-,c)}$ are
      constant. This boils down to
      providing fillers of the squares which we assumed in
      \ref{item:AC-fill} and \ref{item:BC-fill}.
  \end{itemize}
\item We then need to show that $L_{k}$ and $R_{k}$ are constant. To
  show that $L_k$ is constant, we apply Lemma~\ref{lem:evan-smash-unpointed}. This is justified since the codomain of $L_k$ is homogeneous. Hence, we only need to verify that
  $L_k\langle a,b \rangle = L_k (\star_\wedge)$. Unfolding the
  definitions, we see that this is given by
  assumption~\ref{item:AB-fill}. Note that this is where the explosion of
  complexity would normally happen but, thanks
  to Lemma~\ref{lem:evan-smash-unpointed}, we completely avoid having to verify any
  higher coherences. For $R_k$ we again unfold the
  definitions to see that the path required is provided by~\ref{item:C-fill}. \qedhere
  \end{itemize}
\end{proof}

For completeness, let us state the corresponding lemma for functions
$f ,g : ((A \wedge B) \wedge C) \wedge D \to E$, as these appear in the pentagon identity. The proof is
by~Lemmas~\ref{lem:main},\ref{lem:evan-smash-unpointed}~and~\ref{lem:3-eq}, following the exact same line
of attack as before. We acknowledge that the following result is highly technical but stress that the interesting aspect of it is not the exact details of the statement; rather, we include it to showcase the fact
that only squares are involved, as opposed to the (many) high-dimensional
cubes of coherences which would appear in a naive inductive proof.
\begin{lemma}\label{lem:4-eq}
  For any two functions $f ,g : ((A \wedge B) \wedge C) \wedge D \to E$, the
  following data gives an equality $f = g$:
  \begin{enumerate}[label=(\roman*)]
    \item \label{item:homotopy4} A homotopy $h : ((a,b,c,d) : A \times B \times C \times D) \to f
      \langle a,b,c,d \rangle = g \langle a,b,c,d \rangle$.
    \item \label{item:ABD-fill} For every triple $(a,b,d) : A \times B \times D$, a filler of the square
\[\begin{tikzcd}[ampersand replacement=\&]
	{f\langle \star_A,\star_B,\star_C,d\rangle} \&\& {g\langle \star_A,\star_B,\star_C,d\rangle} \\
	{f\langle \star_\wedge,\star_C , d\rangle} \&\& {g\langle \star_\wedge,\star_C ,d\rangle} \\
	{f\langle \star_\wedge ,d\rangle} \&\& {g\langle \star_\wedge ,d\rangle} \\
	{f\langle a,b,\star_C,d\rangle} \&\& {g\langle a,b,\star_C,d\rangle}
	\arrow["{\mathsf{ap}_{f\langle - , d \rangle}(\pushll \langle a,b\rangle)}", from=4-1, to=3-1]
	\arrow["{\mathsf{ap}_{f\langle - , d \rangle}(\pushl{\star_\wedge}^{-1})}", from=3-1, to=2-1]
	\arrow["{\mathsf{ap}_{f\langle - , \star_C,d\rangle}(\pushr{\star_B}^{-1}))}", from=2-1, to=1-1]
	\arrow["{h(\star_A,\star_B,\star_C)}", from=1-1, to=1-3]
	\arrow["{\mathsf{ap}_{g\langle - , \star_C,d\rangle}(\pushr{\star_B}^{-1}))}"', from=2-3, to=1-3]
	\arrow["{\mathsf{ap}_{g\langle - , d \rangle}(\pushl{\star_\wedge}^{-1})}"', from=3-3, to=2-3]
	\arrow["{h(a,b,\star_C)}"', from=4-1, to=4-3]
	\arrow["{\mathsf{ap}_{g\langle - , d \rangle}(\pushll \langle a,b\rangle)}"', from=4-3, to=3-3]
\end{tikzcd}\]
\item \label{item:ACD-fill} For every triple $(a,c,d) : A \times C\times D$, a filler of the square
\[\begin{tikzcd}[ampersand replacement=\&]
	{f\langle \star_A,\star_B,c,d\rangle} \&\& {g\langle \star_A,\star_B,c,d\rangle} \\
	{f\langle \star_\wedge,c,d\rangle} \&\& {g\langle \star_\wedge,c,d\rangle} \\
	{f\langle a,\star_B,c,d\rangle} \&\& {g\langle a,\star_B,c,d\rangle}
	\arrow["{h(a,\star_B,c,d)}"', from=3-1, to=3-3]
	\arrow["{\mathsf{ap}_{f\langle-,c,d\rangle}(\pushl{a})}", from=3-1, to=2-1]
	\arrow["{\mathsf{ap}_{g\langle-,c,d\rangle}(\pushl{a})}"', from=3-3, to=2-3]
	\arrow["{\mathsf{ap}_{f\langle-,c,d\rangle}(\pushl{\star_A})^{-1}}", from=2-1, to=1-1]
	\arrow["{h(\star_A,\star_B,c,d)}", from=1-1, to=1-3]
	\arrow["{\mathsf{ap}_{g\langle-,c,d\rangle}(\pushl{\star_A})^{-1}}"', from=2-3, to=1-3]
\end{tikzcd}\]
\item \label{item:BCD-fill} For every triple $(b,c,d) : B \times C \times D$, a filler of the square
\[\begin{tikzcd}[ampersand replacement=\&]
	{f\langle \star_A,\star_B,c,d \rangle} \&\& {g\langle \star_A,\star_B,c,d \rangle} \\
	{f\langle \star_\wedge,c,d\rangle} \&\& {g\langle \star_\wedge,c,d\rangle} \\
	{f\langle \star_A,b,c,d\rangle} \&\& {g\langle \star_A,b,c,d\rangle}
	\arrow["{h(\star_A,b,c,d)}"', from=3-1, to=3-3]
	\arrow["{\mathsf{ap}_{f\langle-,c,d\rangle}(\pushr{b})}", from=3-1, to=2-1]
	\arrow["{\mathsf{ap}_{g\langle-,c,d\rangle}(\pushr{b})}"', from=3-3, to=2-3]
	\arrow["{\mathsf{ap}_{f\langle-,c,d\rangle}(\pushr{\star_B})^{-1}}", from=2-1, to=1-1]
	\arrow["{\mathsf{ap}_{g\langle-,c,d\rangle}(\pushr{\star_B})^{-1}}"', from=2-3, to=1-3]
	\arrow["{h(\star_A,\star_B,c,d)}", from=1-1, to=1-3]
\end{tikzcd}\]
\item \label{item:CD-fill} For every pair $(c,d) : C \times D$, a filler of the square
\[\begin{tikzcd}[ampersand replacement=\&, row sep = 1.5em]
	{f\langle\star_A,\star_B,\star_C,d\rangle} \&\& {g\langle\star_A,\star_B,\star_C\rangle} \\
	{f\langle\star_\wedge, \star_C,d\rangle} \&\& {g\langle\star_\wedge, \star_C\rangle} \\
	{f\langle\star_\wedge,d\rangle} \&\& {g\langle\star_\wedge,d\rangle} \\
	{f\langle\star_\wedge, c,d\rangle} \&\& {g\langle\star_\wedge, c,d\rangle} \\
	{f\langle\star_A,\star_B,c,d\rangle} \&\& {g\langle\star_A,\star_B,c,d\rangle}
	\arrow["{h(\star_A,\star_B,c,d)}"', from=5-1, to=5-3]
	\arrow["{\mathsf{ap}_{f\langle -,c,d\rangle}(\pushr{\star_B})}", from=5-1, to=4-1]
	\arrow["{\mathsf{ap}_{g\langle -,c,d\rangle}(\pushr{\star_B})}"', from=5-3, to=4-3]
	\arrow["{\mathsf{ap}_{f\langle - , d \rangle}(\pushr{c})}", from=4-1, to=3-1]
	\arrow["{\mathsf{ap}_{g\langle - , d \rangle}(\pushr{c})}"', from=4-3, to=3-3]
	\arrow["{h(\star_A,\star_B,\star_C,d)}", from=1-1, to=1-3]
	\arrow["{\mathsf{ap}_{f\langle - , d \rangle}(\pushl{\star_\wedge})^{-1}}", from=3-1, to=2-1]
	\arrow["{\mathsf{ap}_{f\langle -,\star_C,d\rangle}(\pushr{\star_B})^{-1}}", from=2-1, to=1-1]
	\arrow["{\mathsf{ap}_{g\langle - , d \rangle}(\pushl{\star_\wedge})^{-1}}"', from=3-3, to=2-3]
	\arrow["{\mathsf{ap}_{g\langle -,\star_C,d\rangle}(\pushr{\star_B})^{-1}}"', from=2-3, to=1-3]
\end{tikzcd}\]
\item For every $d : D$, a filler of the square
\[\begin{tikzcd}[ampersand replacement=\& , row sep = 1.5em]
	{f\langle\star_A,\star_B,\star_C,\star_D\rangle} \&\& {g\langle\star_A,\star_B,\star_C,\star_D\rangle} \\
	{f\langle\star_\wedge,\star_C,\star_D\rangle} \&\& {g\langle\star_\wedge,\star_C,\star_D\rangle} \\
	{f\langle\star_\wedge,\star_D\rangle} \&\& {g\langle\star_\wedge,\star_D\rangle} \\
	{f(\star_\wedge)} \&\& {g(\star_\wedge)} \\
	{f\langle\star_\wedge,d\rangle} \&\& {g\langle\star_\wedge,d\rangle} \\
	{f\langle\star_\wedge,\star_C,d\rangle} \&\& {g\langle\star_\wedge,\star_C,d\rangle} \\
	{f\langle\star_A,\star_B,\star_C,d\rangle} \&\& {g\langle\star_A,\star_B,\star_C,d\rangle}
	\arrow["{h(\star_A,\star_B,\star_C,d)}"', from=7-1, to=7-3]
	\arrow["{\mathsf{ap}_{f\langle-,\star_C,d\rangle}(\pushl{\star_A}))}", from=7-1, to=6-1]
	\arrow["{\mathsf{ap}_{g\langle-,\star_C,d\rangle}(\pushl{\star_A}))}"', from=7-3, to=6-3]
	\arrow["{\mathsf{ap}_{f\langle-,d\rangle}(\pushl{\star_\wedge}))}", from=6-1, to=5-1]
	\arrow["{\mathsf{ap}_f(\pushr{d})}", from=5-1, to=4-1]
	\arrow["{\mathsf{ap}_{f}(\pushl{\star_\wedge}))^{-1}}", from=4-1, to=3-1]
	\arrow["{\mathsf{ap}_{f\langle-,\star_D\rangle}(\pushl{\star_\wedge}))^{-1}}", from=3-1, to=2-1]
	\arrow["{\mathsf{ap}_{f\langle-,\star_C,\star_D\rangle}(\pushl{\star_A}))^{-1}}", from=2-1, to=1-1]
	\arrow["{h(\star_A,\star_B,\star_C,\star_D)}", from=1-1, to=1-3]
	\arrow["{\mathsf{ap}_{g\langle-,d\rangle}(\pushl{\star_\wedge}))}"', from=6-3, to=5-3]
	\arrow["{\mathsf{ap}_g(\pushr{d})}"', from=5-3, to=4-3]
	\arrow["{\mathsf{ap}_{g}(\pushl{\star_\wedge}))^{-1}}"', from=4-3, to=3-3]
	\arrow["{\mathsf{ap}_{g\langle-,\star_C,\star_D\rangle}(\pushl{\star_A}))^{-1}}"', from=2-3, to=1-3]
	\arrow["{\mathsf{ap}_{g\langle-,\star_D\rangle}(\pushl{\star_\wedge}))^{-1}}"', from=3-3, to=2-3]
\end{tikzcd}\]
\item \label{item:ABC-fill} For every triple $(a,b,c) : A \times B \times C$, a filler of the square
  \[\begin{tikzcd}[ampersand replacement=\&, row sep = 1.5em]
	{f\langle \star_A,\star_B,\star_C\star_D\rangle} \&\& {g\langle \star_A,\star_B,\star_C\star_D\rangle} \\
	{f\langle \star_\wedge,\star_C,\star_D\rangle} \&\& {g\langle \star_\wedge,\star_C,\star_D\rangle} \\
	{f\langle \star_\wedge,\star_D\rangle} \&\& {g\langle \star_\wedge,\star_D\rangle} \\
	{f(\star_\wedge)} \&\& {g(\star_\wedge)} \\
	{f\langle a,b,c,\star_D\rangle} \&\& {g\langle a,b,c,\star_D\rangle}
	\arrow["{h(a,b,c,\star_D)}"', from=5-1, to=5-3]
	\arrow["{\mathsf{ap}_f(\pushll\langle a,b,c\rangle)}", from=5-1, to=4-1]
	\arrow["{\mathsf{ap}_g(\pushll\langle a,b,c\rangle)}"', from=5-3, to=4-3]
	\arrow["{\mathsf{ap}_g(\pushl{\star_\wedge}))^{-1}}"', from=4-3, to=3-3]
	\arrow["{\mathsf{ap}_f(\pushl{\star_\wedge}))^{-1}}", from=4-1, to=3-1]
	\arrow["{\mathsf{ap}_{f\langle -,\star_D\rangle}(\pushl{\star_\wedge}))^{-1}}", from=3-1, to=2-1]
	\arrow["{\mathsf{ap}_{f\langle -,\star_C,\star_D\rangle}(\pushl{\star_A}))^{-1}}", from=2-1, to=1-1]
	\arrow["{h(\star_A,\star_B,\star_C\star_D)}", from=1-1, to=1-3]
	\arrow["{\mathsf{ap}_{g\langle -,\star_D\rangle}(\pushl{\star_\wedge}))^{-1}}"', from=3-3, to=2-3]
	\arrow["{\mathsf{ap}_{g\langle -,\star_C,\star_D\rangle}(\pushl{\star_A}))^{-1}}"', from=2-3, to=1-3]
  \end{tikzcd}\]
  \end{enumerate}
\end{lemma}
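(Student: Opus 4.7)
The plan is to iterate the same line of attack used in Lemma~\ref{lem:3-eq}, viewing $((A \wedge B) \wedge C) \wedge D$ as a single smash of $(A \wedge B) \wedge C$ with $D$ and peeling off the inner nesting by repeated appeals to Lemmas~\ref{lem:main},~\ref{lem:evan-smash-unpointed}~and~\ref{lem:3-eq}.

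First, I apply Lemma~\ref{lem:main} to $f,g : ((A\wedge B)\wedge C) \wedge D \to E$. This splits the problem into (a)~constructing a homotopy $k : ((y,d) : ((A\wedge B)\wedge C) \times D) \to f\langle y,d\rangle = g\langle y,d\rangle$ and (b)~proving the induced maps $L_k$ and $R_k$ constant.

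For (a), I fix $d : D$ and apply Lemma~\ref{lem:3-eq} to the pair $f(-,d), g(-,d) : (A\wedge B)\wedge C \to E$. The pointwise homotopy that Lemma~\ref{lem:3-eq} requires is supplied by~\ref{item:homotopy4}, while its four squares (items~\ref{item:AC-fill}--\ref{item:C-fill} of that lemma) unfold, after inserting $d$ everywhere, to precisely the squares given in assumptions \ref{item:ACD-fill},~\ref{item:BCD-fill},~\ref{item:ABD-fill}~and~\ref{item:CD-fill} of the present lemma.

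For (b), the map $R_k : D \to (f(\star_\wedge) = g(\star_\wedge))$ is made constant exactly by the filler supplied in item~(vi). The map $L_k : (A\wedge B)\wedge C \to (f(\star_\wedge) = g(\star_\wedge))$, however, has a smash product as its domain; since any path type is homogeneous, I can apply Lemma~\ref{lem:evan-smash-unpointed} to reduce the goal to showing $L_k\langle y,c\rangle = L_k(\star_\wedge)$ for $y : A\wedge B$ and $c : C$. Fixing $c$ and applying Lemma~\ref{lem:evan-smash-unpointed} once more peels off the inner smash, and the remaining pointwise equality is precisely what assumption~\ref{item:ABC-fill} provides.

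The main obstacle I anticipate is bookkeeping: unfolding $L_k$, $R_k$, $L_{h(-,d)}$ and $R_{h(-,d)}$ and confirming that the resulting composite squares genuinely match those displayed in items \ref{item:ABD-fill}--\ref{item:ABC-fill}. This is tedious but mechanical. The crucial payoff is that each use of Lemma~\ref{lem:evan-smash-unpointed} discards exactly the higher-dimensional coherence data that a naive induction would force us to produce, so every remaining obligation is at most a square --- which is the whole point of the heuristic this lemma is meant to showcase.
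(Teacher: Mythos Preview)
Your proposal is correct and follows exactly the line the paper indicates: it too appeals to Lemmas~\ref{lem:main}, \ref{lem:evan-smash-unpointed} and~\ref{lem:3-eq} ``following the exact same line of attack'' as in the proof of Lemma~\ref{lem:3-eq}. Your decomposition---outer application of Lemma~\ref{lem:main}, Lemma~\ref{lem:3-eq} for the fibrewise homotopy $k(-,d)$, and iterated use of Lemma~\ref{lem:evan-smash-unpointed} to collapse $L_k$---is precisely what the paper has in mind, and your remark that the residual work is bookkeeping of the displayed squares is accurate.
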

Let us make three observations about~Lemmas~\ref{lem:3-eq}~and~\ref{lem:4-eq}.
\begin{enumerate}
  \item In both statements, the different pieces of data are almost
    completely mutually independent. The only meaningful choice we can make
    is that of the homotopy $h$. This means that we are free
    to provide any proofs we like for the remaining steps without
    having to worry about future coherences.
  \item In many concrete cases (especially those relating to the symmetric
    monoidal structure of the smash product), the homotopy $h$ will be
    defined by $h(a_1,\dots,a_n) := \refl$. This means that all other
    data we need to provide are equalities of composite paths defined entirely in terms of applications
    of $f$ and $g$ on $\pushll$ and $\pushrr$---something we can usually simply
    unfold to something (hopefully) simple.
  \item Going from Lemma~\ref{lem:3-eq} to Lemma~\ref{lem:4-eq}, we see that
    only two additional assumptions are needed. If we were to
    increase the number of copies of smash products appearing in the
    domain by one, we would only need to provide two additional
    squares (and still no higher coherences). Hence, the complexity of
    such proofs grows linearly with the complexity of the domain. For
    comparison, if we were to resort to a naive proof by a deep smash
    product induction, the amount of data needed would grow
    exponentially.
\end{enumerate}
While it is possible to
generalise~Lemmas~\ref{lem:3-eq}~and~\ref{lem:4-eq}~to a statement
concerning $n$-fold smash products, let us, for now, only summarise the fundamental idea behind Lemmas~\ref{lem:3-eq}~and~\ref{lem:4-eq} in terms of an informal heuristic.
\begin{heuristic}\label{heur}
  \it
  To show that two functions $f,g : \bigwedge_{i \leq n} A_i \to B$ are equal, it suffices, by iterative application Lemmas~\ref{lem:main},~\ref{lem:evan-smash-unpointed}~and~\ref{lem:evan}, to provide a family of paths $h(x_1,\dots,x_n) : f \langle x_1,\dots,x_n \rangle = g \langle x_1 , \dots,x_n\rangle$ for $x_i : A_i$ and to show that it is coherent with $f$ and $g$ on any \textbf{single} application of $\pushll$ or $\pushrr$ (e.g. $\ap{\langle -,x_{i+1},\dots,x_n\rangle}{\pushll \langle x_1,\dots,x_{i-1} \rangle}$. Furthermore, if an equality of pointed functions is required, we need to provide a filler of the following square:
\[\begin{tikzcd}[ampersand replacement=\&]
	{f(\star_\wedge)} \& {\star_B} \& {g(\star_\wedge)} \\
	\vdots \&\& \vdots \\
	{f\langle\star_\wedge,\star_{A_n}\rangle} \&\& {g\langle\star_\wedge,\star_{A_n}\rangle} \\
	{f\langle\star_{A_1},\dots, \star_{A_{n}}\rangle} \&\& {g\langle\star_{A_1},\dots, \star_{A_{n}}\rangle}
	\arrow[from=4-1, to=3-1]
	\arrow[from=3-1, to=2-1]
	\arrow["{\mathsf{ap}_f{(\pushr{\star_A})}}", from=2-1, to=1-1]
	\arrow["{\mathsf{ap}_g{(\pushr{\star_A})}}"',from=2-3, to=1-3]
	\arrow[from=3-3, to=2-3]
	\arrow[from=4-3, to=3-3]
	\arrow["{h(\star_{A_1},\dots,\star_{A_n})}"', from=4-1, to=4-3]
	\arrow["{\star_f}", from=1-1, to=1-2]
	\arrow["{\star_g^{-1}}", from=1-2, to=1-3]
\end{tikzcd}\]
\end{heuristic}
The reader who is looking for a precise mathematical statement of the above is referred to Section~\ref{sec:equaliser} where we also introduce some additional machinery required to make the idea formal. For now, we will settle for this informal heuristic---in practice, when working with functions over a small fixed number of smash products, we do not quite need the full strength of a general theorem.

\section{The symmetric monoidal structure}
\label{sec:symmetric}
Let us reap the fruits of our labour and show that the smash product defines a (1-coherent)
symmetric monoidal product on the universe of pointed types. We will
not verify all axioms here, since this is neither very instructive nor
very interesting. Instead, we sketch the proofs of the two most technical
properties and refer the interested reader to the computer formalisation.

\begin{proposition}\label{prop:hexagon}
The smash product satisfies the hexagon identity, i.e. we have an
equality of pointed functions $H_0 = H_1$ where $H_0$ and $H_1$ are
defined as the composites of each side of the following hexagon.
\[\begin{tikzcd}[ampersand replacement=\&]
	{(A\wedge B)\wedge C} \&\& {(B\wedge A) \wedge C} \\
	{A \wedge(B\wedge C)} \&\& {B\wedge (A \wedge C)} \\
	{(B\wedge C) \wedge A} \&\& {B\wedge(C\wedge A)}
	\arrow["{\alpha_{A,B,C}}"', from=1-1, to=2-1]
	\arrow["{\beta_{A,B}\wedge 1_C}", from=1-1, to=1-3]
	\arrow["{\alpha_{B,A,C}}", from=1-3, to=2-3]
	\arrow["{\beta_{A,B\wedge C}}"', from=2-1, to=3-1]
	\arrow["{\alpha_{B,C,A}}"', from=3-1, to=3-3]
	\arrow["{1_B\wedge\beta_{A,C}}", from=2-3, to=3-3]
	\arrow["{H_0}"{description}, bend right=15 , dashed, from=1-1, to=3-3]
	\arrow["{H_1}"{description}, bend left=15, dashed, from=1-1, to=3-3]
\end{tikzcd}\]
\end{proposition}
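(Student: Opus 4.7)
The plan is to apply Lemma~\ref{lem:3-eq} (i.e., a direct instance of Heuristic~\ref{heur} for a triple smash product) to the two functions $H_0, H_1 : (A \wedge B) \wedge C \to B \wedge (C \wedge A)$. The first, and most important, step is to trace the action of both composites on an arbitrary triple $\langle \langle a,b \rangle , c\rangle$. Going along the upper path, $H_1$ sends this point first to $\langle \langle b,a\rangle,c\rangle$, then to $\langle b, \langle a,c\rangle \rangle$, and finally to $\langle b, \langle c,a\rangle \rangle$. Going along the lower path, $H_0$ sends it first to $\langle a, \langle b,c\rangle\rangle$, then to $\langle \langle b,c\rangle, a\rangle$, and finally to $\langle b, \langle c,a\rangle\rangle$. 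Thus the two maps compute definitionally to the same point, and we may take the homotopy $h$ required by item~\ref{item:homotopy} of Lemma~\ref{lem:3-eq} to be $h(a,b,c) := \refl$.

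Having fixed $h := \refl$, each of the square fillers in items \ref{item:AC-fill}--\ref{item:C-fill} of Lemma~\ref{lem:3-eq} degenerates into an identity between two compositions of $\mathsf{ap}_{H_0}$ and $\mathsf{ap}_{H_1}$ applied to a \emph{single} application of $\pushll$ or $\pushrr$ (sandwiched between inverses of applications of $f$ or $g$ on other $\pushll$/$\pushrr$-constructors). By the definition of $\alpha$ via the equivalence with $\bigwedge(A,B,C)$ in Section~\ref{sec:assoc}, and by the obvious computation of $\beta$ on the $\mathsf{push}$-constructors, each of these compositions can be unfolded and reduced in the neutral HIT $\bigwedge(-,-,-)$, where both sides should reduce either to $\refl$ or to the same $\pushzero{-}$/$\pushone{-}$/$\pushtwo{-}$ constructor up to a small amount of trivial path algebra. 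Concretely, item~\ref{item:AC-fill} boils down to a coherence about $\pushl{a}$, item~\ref{item:BC-fill} to one about $\pushr{b}$, item~\ref{item:AB-fill} to one about $\pushll \langle a,b\rangle$, and item~\ref{item:C-fill} to one about $\pushr{c}$. Each such coherence is a square rather than a higher cube, which is the entire point of going through Lemma~\ref{lem:3-eq}.

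To finish, we upgrade the resulting equality of plain functions to an equality of pointed functions by supplying the additional pointedness square from Heuristic~\ref{heur}: we must check that the pointedness proofs $\star_{H_0}$ and $\star_{H_1}$ cohere with $h(\star_A,\star_B,\star_C) = \refl$ along the relevant concatenation of $\mathsf{ap}_{H_i}(\pushr{\star_C})$, $\mathsf{ap}_{H_i}(\pushl{\star_\wedge})$ and the like. Since both associators and braidings were shown in Section~\ref{sec:assoc} and Proposition~\ref{prop:smash-comm} to be pointed by $\refl$, this square is built entirely from $\refl$'s and is trivial to fill.

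The main obstacle I expect is item~\ref{item:AB-fill}: it is the only filler where both sides involve the compound constructor $\pushll$, so tracing $\mathsf{ap}_{H_0}$ and $\mathsf{ap}_{H_1}$ through the two composites requires unfolding how both $\alpha$ and $\beta$ act on the $\apap{\pushll}{-}$-level, not merely on point constructors. Everything is still a 2-cell rather than a 4-cell (which is precisely why the heuristic is worth setting up), but some bookkeeping involving $\mathsf{push}_{0,1}$, $\mathsf{push}_{0,2}$, $\mathsf{push}_{1,2}$ in $\bigwedge(-,-,-)$ is unavoidable. All other fillers should be essentially immediate once the definitional computations of $\alpha$ and $\beta$ on the point and path constructors are laid out side by side.
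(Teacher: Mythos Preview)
Your proposal is correct and follows essentially the same approach as the paper: apply Lemma~\ref{lem:3-eq}, take $h(a,b,c):=\refl$, reduce each of the four square fillers by unfolding the action of $H_0$ and $H_1$ on a single $\pushll$/$\pushrr$, and finish with the trivial pointedness square since all maps involved are pointed by $\refl$. The one place where you are too pessimistic is item~\ref{item:AB-fill}: the paper computes it explicitly and finds that the two vertical composites are \emph{literally} the same path in $B\wedge(C\wedge A)$, so the filler is again just $\refl$ and no bookkeeping with the $\mathsf{push}_{i,j}$ constructors of $\bigwedge(-,-,-)$ is actually needed.
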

\begin{proof}[Proof sketch]
  We show the statement by an application of our heuristic which, in this case, takes the form of~Lemma~\ref{lem:3-eq}. We provide the data as follows:
  \begin{enumerate}
    \item For the homotopy $h(a,b,c) : H_0\langle a,b,c\rangle = H_1\langle a,b,c \rangle$
    we simply choose $h(a,b,c) := \refl$, since both sides compute to $\langle
    b,c,a\rangle$.
  \item E.g. the fourth datum in~Lemma~\ref{lem:3-eq} computes to\footnote{By
    `computes to' we do not mean `normalises in \Agda to'. We mean
    `compute' in the manual sense, i.e. by tracing $H_0$ and $H_1$ on
    the point and path constructors involved. Direct normalisation in
    \Agda produces rather large and unmanageable terms. However, using
    \Agda to normalise the terms in a more controlled manner
    (i.e. step-by-step) is very useful, as a sanity check, for inspecting the action of
    $H_0$ and $H_1$ on the path constructors involved. } the following square
    filling problem:
\[\begin{tikzcd}[ampersand replacement=\&]
	{\langle \star_B,\star_C,\star_A\rangle} \& {\langle \star_B,\star_C,\star_A\rangle} \\
	{\wedge_C} \& {\wedge_C} \\
	{\wedge_C} \& {\wedge_C} \\
	{\langle b,\star_C,a\rangle} \& {\langle b,\star_C,a\rangle}
	\arrow["{\mathsf{ap}_{\langle b,-\rangle}(\pushr{a}) \cdot \pushl{b} }", from=4-1, to=3-1]
	\arrow["\refl"', from=4-1, to=4-2]
	\arrow["\refl", from=3-1, to=2-1]
	\arrow["{\mathsf{ap}_{\langle b,-\rangle}(\pushr{a}) \cdot \pushl{b} }"', from=4-2, to=3-2]
	\arrow["\refl"', from=3-2, to=2-2]
	\arrow["{\pushl{\star_B}^{-1} \cdot \mathsf{ap}_{\langle \star_B,-\rangle}(\pushl{\star_C})^{-1}}", from=2-1, to=1-1]
	\arrow["{\pushl{\star_B}^{-1} \cdot \mathsf{ap}_{\langle \star_B,-\rangle}(\pushl{\star_C})^{-1}}"', from=2-2, to=1-2]
	\arrow["\refl", from=1-1, to=1-2]
\end{tikzcd}\]
which is solved by $\refl$.
\item The remaining squares are computed and solved in an identical manner.
\item For the pointedness, we need to fill the square outlined in end of the statement of the~\hyperref[heur]{heuristic}. This is equally direct since $\star_{H_0} = \star_{H_1} = \refl$, which holds because all functions involved in the definitions of $H_0$ and $H_1$ are pointed by $\refl$. 
  \end{enumerate}
\end{proof}

\begin{proposition}\label{prop:pentagon}
The pentagon identity holds for the smash product, i.e. we have an
equality of pointed functions $P_0 = P_1$ where $P_0$ and $P_1$ are defined as the composites of each side of the following pentagon.
\[\begin{tikzcd}[ampersand replacement=\&]
	\&\& {((A \wedge B) \wedge C) \wedge D} \\
	{(A \wedge (B \wedge C)) \wedge D} \&\&\&\& {(A \wedge B) \wedge (C \wedge D)} \\
	{A \wedge ((B \wedge C) \wedge D)} \&\&\&\& {A \wedge (B\wedge (C\wedge D))}
	\arrow["{\alpha_{A,B,C} \wedge 1_D}"', from=1-3, to=2-1]
	\arrow["{\alpha_{A,B\wedge C,D}}"', from=2-1, to=3-1]
	\arrow["{\alpha_{A\wedge B,C,D}}", from=1-3, to=2-5]
	\arrow["{\alpha_{A,B,C\wedge D}}", from=2-5, to=3-5]
	\arrow["{1_A \wedge \alpha_{B,C,D}}"', from=3-1, to=3-5]
	\arrow["{P_0}"{description}, shift left=2, bend right=30, shorten <=7pt, shorten >=9pt , dashed, from=1-3, to=3-5]
	\arrow["{P_1}"{description}, bend right=5, dashed, from=1-3, to=3-5]
\end{tikzcd}\]
\end{proposition}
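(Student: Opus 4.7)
The plan is to apply Lemma~\ref{lem:4-eq} directly, since its statement was tailored precisely to the shape $((A \wedge B) \wedge C) \wedge D \to E$ appearing in the pentagon. This reduces proving $P_0 = P_1$ to providing the homotopy~\ref{item:homotopy4} together with the six square fillers \ref{item:ABD-fill}--\ref{item:ABC-fill}, plus a pointedness coherence.

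First, I would define the homotopy $h(a,b,c,d) : P_0\langle a,b,c,d \rangle = P_1\langle a,b,c,d \rangle$ by $\refl$. This is justified because, tracing through the definition of the associator in Table~\ref{tab:alpha}, every composite in the pentagon sends the generator $\langle a,b,c,d \rangle$ (in whichever bracketing) to the generator $\langle a,b,c,d \rangle$ of $A \wedge (B \wedge (C \wedge D))$; the only thing that changes around the pentagon is how paths are constructed, not where points are sent. This is analogous to the choice made in the proof sketch of Proposition~\ref{prop:hexagon}.

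Next, I would dispatch each of the six square-filling obligations \ref{item:ABD-fill}--\ref{item:ABC-fill} in turn. With $h := \refl$, each required square degenerates into an equation between two vertical composites of $\mathsf{ap}_{P_0}$ and $\mathsf{ap}_{P_1}$ applied to sequences of $\pushll$/$\pushrr$ constructors, with $\refl$ on the horizontal edges. By tracing $P_0$ and $P_1$ through the explicit description of $\alpha_{A,B,C}$ afforded by the intermediate HIT $\bigwedge(A,B,C)$ from Section~\ref{sec:assoc}, each side reduces to a short composite of $\pushll$ and $\pushrr$ constructors in the target (possibly with some identity $\refl$ coming from the cases in Table~\ref{tab:alpha} where the associator sends path constructors to $\refl$). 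I expect each of these to reduce to a $\refl$ filler, exactly as in the fourth item of the proof of Proposition~\ref{prop:hexagon}, though a few may require a short manipulation of path composition.

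Finally, for the pointedness coherence I would observe that all the maps $\alpha_{-,-,-}$, $1_A \wedge -$, and $- \wedge 1_D$ involved in both composites are pointed by $\refl$, so $\star_{P_0} = \star_{P_1} = \refl$, and the pointedness square of the heuristic is trivially filled. The main obstacle I anticipate is not any single step but the bookkeeping involved in computing $P_0$ and $P_1$ on the eight distinct sequences of $\pushll$/$\pushrr$ appearing across the six squares; the strategy mentioned in the footnote of Proposition~\ref{prop:hexagon}, namely using \CubicalAgda to normalise these compositions step-by-step as a sanity check, would be essential in practice to keep the computation tractable.
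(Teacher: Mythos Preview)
Your proposal is correct and matches the paper's proof essentially verbatim: the paper applies Lemma~\ref{lem:4-eq}, takes $h := \refl$, and notes that all the square-filling problems become trivial after evaluating $P_0$ and $P_1$ on the one-dimensional path constructors, just as you describe. Your additional remarks about the pointedness coherence and the practical bookkeeping are accurate and consistent with the paper's treatment of the hexagon identity.
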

\begin{proof}
  The statement follows easily by the heuristic, which in this case
  corresponds to~Lemma~\ref{lem:4-eq}. The proof is identical to the proof
  of~Proposition~\ref{prop:hexagon} and follows simply by evaluating $P_0$ and
  $P_1$ on the $1$-dimensional path constructors involved and noting
  that all square-filling problems listed in~Lemma~\ref{lem:4-eq} become
  trivial.
\end{proof}
All other axioms defining a 1-coherent symmetric monoidal wild category follow in
the same direct manner and we, after some rather mechanical labour (see computer formalisation),
easily arrive at the main result.
\begin{therm}\label{thm:main}
  The universe of pointed types forms a 1-coherent symmetric monoidal wild category
  with the smash product as tensor product.
\end{therm}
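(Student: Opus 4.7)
The plan is to assemble the results already proved in the paper and dispatch the remaining axioms via the heuristic of Section~\ref{sec:heuristic}. By the constructions of the previous sections we already have the tentative bifunctor $\wedge$, the associator $\alpha_{A,B,C}$, the braiding $\beta_{A,B}$ (from Proposition~\ref{prop:smash-comm}), and the hexagon and pentagon identities (Propositions~\ref{prop:hexagon}~and~\ref{prop:pentagon}). What remains is to (i) pick a unit $I$ and build pointed equivalences $\lambda_A : I \wedge A \simeq_\star A$ and $\rho_A : A \wedge I \simeq_\star A$; (ii) verify the bifunctoriality equations for $\wedge$ and the naturality of $\alpha$, $\beta$, $\lambda$, $\rho$; (iii) establish the triangle identity; and (iv) check $\beta_{B,A} \circ \beta_{A,B} = 1_{A \wedge B}$.

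For the unit I would take $I := S^{0}$, the two-point pointed type. The unitors $\lambda_A$ and $\rho_A$ are then built by smash recursion: on the non-basepoint element of $I$ they act as the identity on $A$, while on the basepoint branch everything collapses via $\pushll$ and $\pushrr$. Constructing inverses and verifying the round-trip equations is a low-dimensional exercise that does not require the heuristic machinery.

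The naturality squares for $\lambda$, $\rho$, and $\beta$, the bifunctoriality equations for $\wedge$ (preservation of identities and composition), and the axiom $\beta_{B,A} \circ \beta_{A,B} = 1_{A \wedge B}$ are all equalities of functions with a single smash product as domain; I would dispatch each by Lemma~\ref{lem:main} with $h := \refl$, after which the constancy conditions on $L_h$ and $R_h$ reduce, upon unfolding, to easily fillable equalities of composite paths. For the naturality of $\alpha$ and for the triangle identity the source is a double smash product, so the appropriate tool is Lemma~\ref{lem:3-eq}; again I would take $h := \refl$ and, as in the proof of Proposition~\ref{prop:hexagon}, trace the two sides on the smash generators to observe that the four square-filling problems~\ref{item:AC-fill}--\ref{item:C-fill} collapse to trivial coherences. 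The pointedness square required by the~\hyperref[heur]{heuristic} is likewise $\refl$-fillable because every map in sight is pointed by $\refl$.

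The principal obstacle I anticipate is not mathematical depth but bookkeeping: for each axiom one must carefully trace the relevant composites on $\pushll$ and $\pushrr$ to confirm that the square-fillings really do reduce to $\refl$ (or to a visibly contractible path). This is precisely the accounting that already made the hexagon and pentagon proofs tractable and that is well-suited to mechanised verification, so I do not expect any qualitatively new difficulties beyond those encountered in Section~\ref{sec:symmetric}.
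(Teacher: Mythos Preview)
Your proposal is correct and follows essentially the same approach as the paper: the paper's own proof simply asserts that, after Propositions~\ref{prop:hexagon} and~\ref{prop:pentagon}, ``all other axioms defining a 1-coherent symmetric monoidal wild category follow in the same direct manner'' via the heuristic, deferring the details to the computer formalisation. Your outline is in fact more explicit than the paper's text---correctly identifying $S^0$ as the unit, and sorting the remaining axioms by domain complexity so as to apply Lemma~\ref{lem:main} or Lemma~\ref{lem:3-eq} as appropriate---but the underlying strategy is identical.
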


\subsection{Back to Brunerie's thesis}
A first consequence of the fact that Theorem~\ref{thm:main} finally has a complete and
computer formalised proof is the correctness of
Brunerie's PhD thesis~(\citeyear{Brunerie16}). While a computer
formalisation of the main results of the thesis was presented
by~\cite{LM23}, this formalisation did not stay completely true to
Brunerie's original proof. Indeed, certain proofs and constructions
were reworked in order not to rely on results concerning smash
products. Most notably, the \emph{cup product} was redefined using an
alternative definition by~\citet[Section 4.1]{BLM22}.
In Brunerie's thesis, the cup product (on integral Eilenberg-MacLane spaces) is defined as a
map $\smile : K(\mathbb{Z},n)\wedge K(\mathbb{Z},m) \to {K(\mathbb{Z},n+m)}$
where, for $n ,m \geq 1$, we have $K(\mathbb{Z},n) := \truncT{S^n}_{n}$, i.e.\ the $n$-truncation of the $n$-sphere. Here, we define $S^n := \Sigma^{n+1}(\emptyset)$, i.e.\ it is the $(n+1)$-fold suspension of the empty type, where, recall, the suspension of a type $A$, is the HIT $\Sigma(A)$ generated by
\begin{itemize}
\item two points $\north,\south : \Sigma(A)$,
  \item paths $\merid{(a)} : \north = \south$ for each $a:A$.
\end{itemize}
Brunerie's construction of the cup product is by means of a lift from the corresponding map on spheres.
\[
\begin{tikzcd}[ampersand replacement=\&]
	{K(\mathbb{Z},n)\wedge K(\mathbb{Z},m)} \&\& {K(\mathbb{Z},n+m)} \\
	{S^n \wedge S^m} \&\& {S^{n+m}}
	\arrow["{\trunc{-}\,\wedge \,\trunc{-}}", from=2-1, to=1-1]
	\arrow["{\wedge_{n,m}}"', from=2-1, to=2-3]
	\arrow["\trunc{-}"', from=2-3, to=1-3]
	\arrow["\smile", dashed, from=1-1, to=1-3]
\end{tikzcd}
\]
Above, the map $\wedge_{n,m} : S^n \wedge S^m \to S^{n+m}$ is the
canonical equivalence defined in e.g.~\citet[Proposition 4.2.2]{Brunerie16}. With this construction, most properties of the cup product can be shown by showing that the corresponding properties hold for $\wedge_{n,m}$. In particular, graded-commutativity and associativity follow, respectively, from the following two propositions.
\begin{proposition}[Brunerie, Proposition 4.2.4]\label{prop:cup-comm}
  The following diagram commutes.
  \[
\begin{tikzcd}[ampersand replacement=\&]
	{S^n \wedge S^m} \&\& {S^m \wedge S^n} \\
	{S^{n+m}} \&\& {S^{n+m}}
	\arrow[from=1-1, to=1-3]
	\arrow["{\wedge_{n,m}}"', from=1-1, to=2-1]
	\arrow["{(-1)^{nm}}"', from=2-1, to=2-3]
	\arrow["{\wedge_{m,n}}", from=1-3, to=2-3]
\end{tikzcd}
\]
where $(-1) : S^n \to S^n$ is defined by sending $\merid{(a)}$ to $\merid{(a)}^{-1}$.
\end{proposition}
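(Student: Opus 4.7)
The plan is to proceed by induction on $n$, handling the base case $n = 1$ directly and leveraging the heuristic of Section~\ref{sec:heuristic} together with the suspension structure of spheres in the inductive step. The two pointed maps $f := (-1)^{nm} \circ \wedge_{n,m}$ and $g := \wedge_{m,n} \circ \tau$, where $\tau : S^n \wedge S^m \to S^m \wedge S^n$ is the swap equivalence from Proposition~\ref{prop:smash-comm}, both go from $S^n \wedge S^m$ to $S^{n+m}$, and the goal is to show they agree as pointed functions.

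For the base case $n = 1$, the map $\wedge_{1,m}$ presents $S^1 \wedge S^m$ as a model of $S^{m+1}$ via the suspension-smash isomorphism $S^1 \wedge A \simeq \Sigma A$. Under this identification, the claim reduces to showing that the self-map of $S^{m+1}$ arising from $\wedge_{m,1} \circ \tau \circ \wedge_{1,m}^{-1}$ coincides with the degree $-1$ map, which matches $(-1)^{1\cdot m}$. This is verified by a direct path-algebraic computation tracking the action of the composite on the meridians $\merid{(a)}$, sending each to its inverse up to the appropriate identifications.

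For the inductive step, I would apply Lemma~\ref{lem:main} to the two maps at dimension $(n+1,m)$, treating them as functions from the single smash product $S^{n+1} \wedge S^m$. The required pointwise homotopy $h(x,y) : f\langle x,y\rangle = g\langle x,y\rangle$ is constructed by sphere induction on $x \in S^{n+1} = \Sigma S^n$, with the meridian case handled by the inductive hypothesis. The constancy conditions on $L_h$ and $R_h$ land in a loop space of $S^{n+1+m}$, which is homogeneous, so Lemma~\ref{lem:evan-smash-unpointed} applies (once we iterate our heuristic one more layer into the suspension); the resulting square-fillings follow from the naturality of the suspension-smash equivalence and the sign bookkeeping $(-1)^{(n+1)m} = (-1)^{nm}\cdot (-1)^m$.

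The hard part will be the base case, where matching the swap to the degree $-1$ map on $S^{m+1}$ requires a careful manipulation of paths in the suspension; although the manipulation is intricate, our heuristic keeps it confined to 2-dimensional square-fillings rather than the higher cubes one would otherwise have to construct. A secondary obstacle is ensuring that the naturality of the suspension-smash equivalence with respect to $\tau$ is correctly matched against the antipode in the inductive step, but this becomes a routine check once the base case is in place.
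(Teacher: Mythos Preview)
The paper does not give its own proof of this proposition; it attributes the result to Brunerie's thesis and observes that Brunerie's argument relies on the symmetric monoidal structure of the smash product, which is exactly what Theorem~\ref{thm:main} now supplies. Brunerie's proof proceeds by induction, writing $S^{n+1} \simeq S^1 \wedge S^n$ and invoking the hexagon identity (together with the naturality of $\alpha$ and $\beta$) to pass the braiding through the associator and reduce to the previous stage. Your approach shares the inductive skeleton but tries to replace the appeal to the symmetric monoidal axioms with a direct application of the heuristic from Section~\ref{sec:heuristic}.

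There is, however, a genuine gap in your base case. With $n = 1$ the sign is $(-1)^{1 \cdot m} = (-1)^m$, not degree $-1$; the self-map $\wedge_{m,1} \circ \tau \circ \wedge_{1,m}^{-1}$ must be identified with $(-1)^m$, which is the identity when $m$ is even. More importantly, this identification is not a ``direct path-algebraic computation'': the map $\wedge_{m,1}$ is itself defined by recursion on $m$, so for general $m$ the base case is essentially the $(m,1)$ instance of the very proposition you are trying to prove, and establishing it requires a further induction on $m$ of the same shape as the outer one on $n$. Inducting on $n$ alone therefore does not simplify anything; you need either a double induction or an induction on $n+m$, with only the genuinely elementary case $n = m = 1$ as the starting point. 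Once you set things up that way, the ``naturality of the suspension--smash equivalence with respect to $\tau$'' that you defer in the inductive step is precisely the content of the hexagon identity, so your argument converges with Brunerie's rather than bypassing it.
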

\begin{proposition}[Brunerie, Proposition 4.2.2]\label{prop:cup-assoc}
  The following diagram commutes.
\[
\begin{tikzcd}[ampersand replacement=\& , row sep = small]
	{(S^n\wedge S^m) \wedge S^k} \&\& {S^{n+m}\wedge S^k} \\
	\&\&\&\& {S^{n+n+k}} \\
	{S^n\wedge(S^m \wedge S^k)} \&\& {S^{n}\wedge S^{m+k}}
	\arrow["{\alpha_{S^n,S^m,S^k}}"', from=1-1, to=3-1]
	\arrow["{(\wedge_{n,m}) \wedge 1_{S^k}}", from=1-1, to=1-3]
	\arrow["{ 1_{S^n} \wedge (\wedge_{m,k})}"', from=3-1, to=3-3]
	\arrow["{\wedge_{n+m,k}}", from=1-3, to=2-5]
	\arrow["{\wedge_{n,m+k}}"', from=3-3, to=2-5]
\end{tikzcd}
\]
\end{proposition}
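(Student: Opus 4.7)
The plan is to apply Lemma~\ref{lem:3-eq} directly, with $f := \wedge_{n+m,k} \circ (\wedge_{n,m} \wedge 1_{S^k})$ and $g := \wedge_{n,m+k} \circ (1_{S^n} \wedge \wedge_{m,k}) \circ \alpha_{S^n,S^m,S^k}$, both viewed as (plain) maps $(S^n \wedge S^m) \wedge S^k \to S^{n+m+k}$. Since both composites are pointed by $\refl$ (every constituent map is), promoting the resulting equality to an equality of pointed functions at the end will require only the extra square spelled out in the heuristic, and this will be filled by $\refl$ as well.

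For step \ref{item:homotopy}, I would set $h(a,b,c) := \refl$. By the explicit definition of $\wedge_{n,m}$ on spheres (tracing what it does to $\smashinc{a}{b}$) together with the description of $\alpha$ via $\bigwedge(S^n,S^m,S^k)$ given in Section~\ref{sec:assoc}, both $f\smashinc{\smashinc{a}{b}}{c}$ and $g\smashinc{\smashinc{a}{b}}{c}$ reduce to the same point of $S^{n+m+k}$ (essentially a common concatenation of meridians indexed by $a,b,c$); so $\refl$ suffices.

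Items \ref{item:AC-fill}--\ref{item:C-fill} of Lemma~\ref{lem:3-eq} then each become a square whose sides consist only of $\mathsf{ap}$s of $f$ and $g$ on $\pushll$, $\pushrr$, and $\pushlr$. Here the payoff of the paper's approach appears: because $h$ was $\refl$, there are no cross-terms, and each square can be computed by unfolding $f$ and $g$ on one path constructor at a time. Using the construction of $\alpha_{A,B,C}$ through the neutral HIT $\bigwedge(A,B,C)$, the $\mathsf{ap}$s of the associator on $\pushll$ and $\pushrr$ are made transparent (they translate to the $\mathsf{push}_i$ and $\mathsf{push}_{i,j}$ constructors), so the two sides of each square match up definitionally or by a short rearrangement of meridian composites in $S^{n+m+k}$.

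The main obstacle — as in the proofs of Propositions~\ref{prop:hexagon} and~\ref{prop:pentagon} — is not conceptual but purely the bookkeeping of tracing $\wedge_{n,m}$ and $\alpha$ on path constructors. Before the heuristic this would have been a $4$-dimensional coherence problem of the type~\eqref{eq:annoying}, which is precisely what historically blocked Brunerie's original argument. With Lemma~\ref{lem:3-eq} in hand the entire proof collapses to four $2$-dimensional square-fillings, each of which is a short (and in the formalisation, largely mechanical) computation in $\pi_*$-style path algebra on the suspension structure of spheres. For this reason I expect no genuine mathematical difficulty, only the careful verification best carried out in \CubicalAgda.
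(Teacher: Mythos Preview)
Your proposal diverges from the paper in an important way. The paper does \emph{not} give a direct proof of this proposition; it simply observes that Brunerie's original argument (Proposition~4.2.2 in his thesis) goes through once the pentagon identity is available, and Theorem~\ref{thm:main} supplies exactly that. Brunerie's proof is by induction on $n$: one peels off a suspension using $\Sigma A \wedge B \simeq \Sigma(A\wedge B)$, and in the inductive step the pentagon identity is what reconciles the two ways of reassociating. So the paper's route is ``pentagon $\Rightarrow$ Brunerie's induction $\Rightarrow$ Proposition~\ref{prop:cup-assoc}'', not a fresh application of Lemma~\ref{lem:3-eq}.

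Your direct approach has a genuine gap at step~\ref{item:homotopy}. You set $h(a,b,c):=\refl$, claiming that $f\langle\langle a,b\rangle,c\rangle$ and $g\langle\langle a,b\rangle,c\rangle$ ``reduce to the same point of $S^{n+m+k}$ (essentially a common concatenation of meridians).'' But a point of $S^{n+m+k}$ is not a concatenation of meridians, and more to the point, $\wedge_{n,m}$ is defined by induction on $n$ via the suspension equivalence; its value on a generic $\langle a,b\rangle$ does not unfold to anything you can compare by $\refl$ with $\wedge_{n,m+k}\langle a,\wedge_{m,k}\langle b,c\rangle\rangle$. The situation is unlike Propositions~\ref{prop:hexagon} and~\ref{prop:pentagon}, where the maps in play ($\alpha$, $\beta$, $1$) merely permute coordinates, so that $h=\refl$ really is definitional. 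Here the very thing you are asserting by $\refl$ is the pointwise content of the proposition.

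If you tried to repair this by producing a nontrivial $h$, you would have to construct it by induction on $n$ (that is how $\wedge_{n,m}$ is built), and the squares \ref{item:AC-fill}--\ref{item:C-fill} would then carry that inductive structure too. Unwinding this, you recover essentially Brunerie's induction, and the step where two associators must be compared is exactly where the pentagon identity enters. So the heuristic does not shortcut this particular proposition; it shortcuts the pentagon identity, which then feeds into Brunerie's argument.
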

These results are proved by Brunerie using proofs which rely on the symmetric monoidal structure of the smash product. While Proposition~\ref{prop:cup-comm} should be provable using the partial proof of symmetric monoidality due to~\citet[Section 4.3]{FlorisPhd}, Brunerie's proof of Proposition~\ref{prop:cup-assoc} relies on the pentagon identity which, until now, has been open. Thus, with Theorem~\ref{thm:main} we can finally conclude the following.
\begin{therm}\label{thm:brunerie-repair}
  The cup product, as constructed in \citet[Definition 5.1.6]{Brunerie16}, forms a graded-commutative and associative multiplication $K(\mathbb{Z},n) \wedge K(\mathbb{Z},m) \to K(\mathbb{Z},n+m)$.
\end{therm}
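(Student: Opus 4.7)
The plan is to follow Brunerie's original strategy from his thesis~\citep{Brunerie16}, now that the missing ingredient---the full symmetric monoidal structure on the smash product---has been secured by Theorem~\ref{thm:main}. By construction, the cup product $\smile$ is the unique lift making the displayed triangle above commute, so proving graded-commutativity and associativity of $\smile$ amounts to lifting the corresponding coherences on spheres, Propositions~\ref{prop:cup-comm} and~\ref{prop:cup-assoc}, through the truncation maps.

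For graded-commutativity, I would start by observing that the target $K(\bZ,n+m)$ is an $(n+m)$-type, so by the universal property of truncation, a map out of $K(\bZ,n)\wedge K(\bZ,m)$ is determined by its precomposition with $\trunc{-}\wedge\trunc{-} : S^n \wedge S^m \to K(\bZ,n)\wedge K(\bZ,m)$. Using the naturality of the braiding $\beta_{A,B}$ from Theorem~\ref{thm:main} against these truncation maps, one reduces the problem to the sphere-level equality in Proposition~\ref{prop:cup-comm}, postcomposed with $\trunc{-}$. The $(-1)^{nm}$ factor appearing there corresponds to the sign map on $K(\bZ,n+m)$ induced by its $H$-space inverse.

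For associativity, the argument is structurally identical but uses the associator $\alpha_{A,B,C}$ in place of $\beta_{A,B}$. Precomposing the desired equality of maps $(K(\bZ,n)\wedge K(\bZ,m))\wedge K(\bZ,k) \to K(\bZ,n+m+k)$ with the triple truncation $(\trunc{-}\wedge\trunc{-})\wedge\trunc{-}$ and invoking the naturality of $\alpha$ to slide the truncations past the associator reduces the statement to Proposition~\ref{prop:cup-assoc}. This is exactly the point at which Brunerie's argument implicitly requires the full symmetric monoidal coherence of $\wedge$; in particular, the pentagon identity is what ensures that the various reassociations glue consistently with the lift.

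The main obstacle was, of course, the unavailability of Theorem~\ref{thm:main}---and especially the pentagon identity---prior to the present work. Once Theorem~\ref{thm:main} is in hand, the remaining steps are routine: unfold Brunerie's definitions, apply the universal property of truncation, and cite the appropriate naturality and coherence squares from the symmetric monoidal structure. The detailed calculation is carried out in the accompanying \CubicalAgda formalisation.
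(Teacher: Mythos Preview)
Your proposal is correct and matches the paper's own treatment: the paper does not give a standalone proof of this theorem but simply observes that Brunerie's original arguments for Propositions~\ref{prop:cup-comm} and~\ref{prop:cup-assoc} already reduce the statement to the symmetric monoidal coherences, which are now supplied by Theorem~\ref{thm:main}. One factual correction: you claim the detailed calculation is in the accompanying \CubicalAgda formalisation, but the paper explicitly remarks that this particular theorem has \emph{not} been formalised, since well-developed formalisations of the cup product already exist elsewhere.
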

We remark that we have not provided a computer formalisation of the above since there already exist well developed computer formalisations of the cup product and cohomology rings~\citep{BLM22,LLM23,LM24}. 


\section{A formal statement of the heuristic}
\label{sec:equaliser}
In a classical setting, the iterated smash product $\bigwedge_{i \leq n} {A_i}$ can be described as the quotient space $(A_0 \times \dots \times A_{n}) / \fw{i\leq n}{A_i}$ where $$\fw{i\leq n}{A_i} := \{(a_0,\dots,a_{n}) \mid a_i = \star_{A_i} \text{ for some $i$} \}\subset A_0 \times \dots \times A_{n}$$
It is unclear how to mimic this construction in HoTT in a useful way, as the naive definition forces us to add, in a systematic way, an exponential number of higher coherences. However, the naive definition may still prove useful. Here, by `naive definition', we mean the disjoint union
\begin{align*}
  \fwred{i \leq n}{A_i} := \bigsqcup_{i \leq n}\left(A^{(i)}_{0} \times \dots \times A^{(i)}_{n} \right)\hspace{1cm} &\mathsf{ where }\hspace{.2cm}A^{(i)}_j = \begin{cases} A_j & \text{ if $i \neq j$}\\
    1 & \text{ if $i = j$} 
    \end{cases}
\end{align*}
While the cofibre of the canonical map $\fwred{i \leq n}{A_i} \to A_0 \times \dots \times A_{n}$ does not quite give us $\bigwedge_{i \leq n} {A_i}$, it \emph{does}, however, provide a useful approximation.
This is, in fact, precisely the content of our heuristic. We will soon make this precise. Before this, let us give a less set-theoretic definition of $\fwred{i \leq n}{A_i}$ which is easier to work with in HoTT.
\begin{definition}
  Given pointed types $A_0,\dots,A_{n}$ we define $\fwred{i\leq n}{A_i}$ by induction on $n$ as follows.
  \[
  \fwred{i\leq n}{A_i} :=
  \begin{cases}
    1 & \text{ if $n = 0$}\\
    (\fwred{i\leq {n-1}}{A_i} \times A_{n}) + (A_0\times \dots \times {A_{n-1}}) & \text{ if $n > 0$}
    \end{cases}
  \]
\end{definition}
Let us, for clarity, explicitly describe the canonical map $\gamma_n : \fwred{i\leq n}{A_i}  \to A_0 \times \dots \times A_n$. It is recursively defined with $\gamma_0 : 1 \to A_0$ picking out the basepoint $\star_{A_{0}}$ and \[ \gamma_{n} : (\fwred{i\leq {n-1}}{A_i} \times A_{n}) + (A_0\times \dots \times {A_{n-1}})  \to A_0\times \dots \times A_n \] being defined by $\gamma_n = \gamma_n^{(1)} + \gamma_{n}^{(2)}$
where
\begin{align*}
  \gamma_n^{(1)}(x,a_{n}) &= (\gamma_{n-1}(x),a_n)\\
  \gamma_n^{(2)}(a_0,\dots,a_{n-1}) &= (a_0,\dots,a_{n-1},\star{A_{n}})
\end{align*}
\begin{definition}
  Given pointed types $A_0,\dots,A_n$, we define $\widetilde{\bigwedge}_{i \leq n}{A_i}$ to be the cofibre of \linebreak $\gamma_n : \fwred{i\leq n}{A_i}  \to A_0 \times \dots \times A_n$, i.e. the following pushout.
  \[
\begin{tikzcd}[ampersand replacement=\&]
	{\fwred{i\leq n}{A_i} } \&\& {A_0 \times \dots \times A_n} \\
	{1} \&\& {\widetilde{\bigwedge}_{i \leq n}{A_i}}
	\arrow["{\gamma_n}", from=1-1, to=1-3]
	\arrow[from=1-1, to=2-1]
	\arrow[from=2-1, to=2-3]
	\arrow["\lrcorner"{anchor=center, pos=0.125, rotate=180}, draw=none, from=2-3, to=1-1]
	\arrow[from=1-3, to=2-3]
\end{tikzcd}
\]
We give its constructors explicit names and write
\begin{itemize}
\item $\star_{\widetilde{\wedge}}$ for its basepoint,
\item $\langle a_0,\dots,a_n \rangle$ for its underlying points,
\item $\pushop^{(1)}(x) : \langle \gamma_n^{(1)}(x) \rangle = \star_{\widetilde{\wedge}}$ for the first coherence given by the pushout square,
\item $\pushop^{(2)}(x) : \langle \gamma_n^{(2)}(x) \rangle = \star_{\widetilde{\wedge}}$ for the second coherence given by the pushout square.
\end{itemize}
\end{definition}

It is easy to see that the composition $\fwred{i \leq n}{A_i} \to A_0\times \dots \times A_n \to \bigwedge_{i \leq n} {A_i}$ is null-homotopic. Hence, there is a basepoint preserving inclusion (of constructors) $\iota : \widetilde{\bigwedge}_{i \leq n}{A_i} \to \bigwedge_{i \leq n}{A_i} $ s.t. $\iota\langle a_0,\dots,a_n\rangle = \langle a_0,\dots,a_n\rangle$.

We also remark that there is an inclusion $\uparrow: \left(\widetilde{\bigwedge}_{i \leq {n-1}} {A_i}\right)\times A_{n} \to \widetilde{\bigwedge}_{i \leq {n}} {A_i}$. It is defined by
\begin{align*}
  \uparrow(\star_{\widetilde{\wedge}},a_n) &:= \star_{\widetilde{\wedge}}\\
  \uparrow(\langle a_0,\dots,a_{n-1} \rangle,a_n) &:= \langle a_0,\dots,a_n \rangle\\
  \ap{\uparrow(-,a_n)}{\push{x}} &:= \pushop^{(1)}{(x,a_n)}
\end{align*}
Furthermore, $\uparrow$ commutes with $\iota$ in the sense that, for $x : \widetilde{\bigwedge}_{i \leq {n-1}} {A_i}$ and $a_n : A_{n}$, we have the following path (in $\bigwedge_{i \leq n}{A_i}$): $$\uparrow^{\mathsf{coh}}(x , a_n) \,: \iota\left(\uparrow(x,a_n)\right) = \langle \iota(x) , a_n \rangle$$
One constructs $\uparrow^{\mathsf{coh}}(x , a_n)$ very directly by induction on $x$. Let us give the construction when $x$ is a point constructor. When $x:=\langle a_0,\dots,a_{n-1} \rangle$, the statement holds by $\refl$. When $x:=\star_{\widetilde{\wedge}}$, the type of $\uparrow^{\mathsf{coh}}(x , a_n)$ reduces to $\star_{\wedge} = \langle \star_{\wedge},a_n \rangle$ which holds by $\pushr{a_n}^{-1}$. Hence, we define
\begin{align}
  \label{eq:cohdef-pairs}
  \uparrow^{\mathsf{coh}}(x , a_n):=\refl\\
  \label{eq:cohdef}
  \uparrow^{\mathsf{coh}}(\star_{\widetilde{\wedge}} , a_n) := \pushr{a_n}^{-1}
\end{align}
The higher coherence is very straightforward and we omit it for the sake of readability. We are now ready for the key result of the section. It makes precise to what extent $\widetilde{\bigwedge}_{i \leq n}{A_i}$ approximates ${\bigwedge}_{i \leq n}{A_i}$ and may be seen as a formal version of our heuristic. We will later restate the result in a less technical and more self-contained form.
\begin{therm}\label{thm:equaliser}
  Let $f, g : \bigwedge_{i \leq n} A_i \to B$. Given a homotopy $\widetilde{p}_n : (x : \widetilde{\bigwedge}_{i \leq n} A_i) \to f(\iota(x)) = g(\iota(x))$, there is a homotopy $p_n : (x : \bigwedge_{i \leq n} A_i) \to f(x) = g(x)$. Furthermore, $p_n$ satisfies:
  \begin{align}
  \label{eq:coh1}
  p_n(\star_\wedge) &= \widetilde{p}_n(\star_{\widetilde{\wedge}}) \\
  \label{eq:coh2}
  p_n \langle a_0,\dots,a_n\rangle &= \widetilde{p}_n \langle a_0,\dots,a_n\rangle
  \end{align}
\end{therm}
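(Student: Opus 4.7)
My plan is induction on $n$, implementing the idea that $\widetilde{\bigwedge}_{i \leq n} A_i$ packages exactly the data required by the heuristic (a homotopy on point constructors together with the action on single $\pushop^{(1)}/\pushop^{(2)}$-constructors), while Lemmas~\ref{lem:main} and~\ref{lem:evan-smash-unpointed} provide the machinery to extend such data to the full $\bigwedge_{i \leq n} A_i$. For small $n$ (say $n \leq 1$), the inclusion $\iota$ is essentially an equivalence on constructors, so $p_n$ is obtained almost verbatim from $\widetilde{p}_n$, and the required computational equalities~\eqref{eq:coh1} and~\eqref{eq:coh2} are direct.

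For the inductive step, I view $\bigwedge_{i \leq n} A_i$ as $\left(\bigwedge_{i \leq n-1} A_i\right) \wedge A_n$ and apply Lemma~\ref{lem:main} to $f$ and $g$. This splits the task of building $p_n$ into three subgoals: (a) a homotopy $h(y, a) : f\langle y, a\rangle = g\langle y, a\rangle$; (b) constancy of $L_h$; (c) constancy of $R_h$. For (a), fixing $a : A_n$, I apply the induction hypothesis to the restricted maps $y \mapsto f\langle y, a\rangle$ and $y \mapsto g\langle y, a\rangle$. The $\widetilde{p}_{n-1}$-input required by the IH is obtained by composing $\widetilde{p}_n$ with $z \mapsto \uparrow(z, a)$ and transporting via the coherence $\uparrow^{\mathsf{coh}}(z, a)$; by equation~\eqref{eq:cohdef-pairs}, this transport is $\refl$ on point constructors, which is what ensures that the resulting $p_n$ will satisfy~\eqref{eq:coh2}.

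For (b) and (c), both $L_h$ and $R_h$ take values in a path type in $B$, which is homogeneous. Lemma~\ref{lem:evan-smash-unpointed}, applied iteratively in the same manner as in the proof of Lemma~\ref{lem:4-eq}, reduces constancy to pointwise equalities on tuples $\langle \vec{a} \rangle$. Unfolding the definitions of $L_h$ and $R_h$, each such pointwise equality is a square in $B$ whose sides consist of $f$ and $g$ applied to a single $\pushop^{(1)}$ or $\pushop^{(2)}$ constructor of $\widetilde{\bigwedge}_{i \leq n} A_i$; this is precisely what $\widetilde{p}_n$ provides on that push.

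The main obstacle I anticipate is bookkeeping: each peeling-off via Lemma~\ref{lem:evan-smash-unpointed} introduces a cascade of path-algebraic rewrites, and one must check that the pointwise square so produced coincides—on the nose—with the square supplied by $\widetilde{p}_n$ on the corresponding push. A related concern is verifying that the computation rules~\eqref{eq:coh1} and~\eqref{eq:coh2} hold definitionally, not merely up to a higher path; this constrains the choice of transports and the order in which Lemmas~\ref{lem:main} and~\ref{lem:evan-smash-unpointed} are instantiated, and is the sort of thing best checked against a computer formalisation.
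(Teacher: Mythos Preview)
Your plan is essentially the paper's proof: induction on $n$, peel off the outermost smash, use the induction hypothesis (fed by $\widetilde{p}_n \circ \uparrow(-,a_n)$ transported along $\uparrow^{\mathsf{coh}}$) to build the homotopy on $\langle y, a_n\rangle$, and then discharge the remaining $\pushll/\pushrr$ obligations using homogeneity of path types. The use of Lemma~\ref{lem:main} versus the paper's Lemma~\ref{lem:equaliser} is cosmetic.

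There is one genuine inaccuracy. You treat $L_h$ and $R_h$ symmetrically, proposing to kill both by Lemma~\ref{lem:evan-smash-unpointed}. But $R_h$ has domain $A_n$, which is not a smash product, so that lemma simply does not apply, and there is no $\pushop$-square of $\widetilde{p}_n$ that matches it. In the paper this case is instead handled \emph{directly}: the inductive coherence~\eqref{eq:coh1} for $p_{n-1}$, together with $\uparrow^{\mathsf{coh}}(\star_{\widetilde\wedge},a_n) = \pushr{a_n}^{-1}$, yields $p_n\langle \star_\wedge, a_n\rangle = \ap{f}{\pushr{a_n}} \cdot p_n(\star_\wedge) \cdot \ap{g}{\pushr{a_n}}^{-1}$, which makes the $R_h$-square (equivalently, the $\pushrr$-filler) trivially commute. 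So the missing ingredient is not another application of the homogeneity trick but rather the realisation that the inductive coherence~\eqref{eq:coh1} is precisely what discharges the $\pushrr$ side. Only $L_h$ (domain $\bigwedge_{i\le n-1} A_i$) needs Lemma~\ref{lem:evan-smash-unpointed}, and there the reduced square is supplied by $\widetilde{p}_n$ on $\pushop^{(2)}$.

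A minor point: the theorem only asserts~\eqref{eq:coh1} and~\eqref{eq:coh2} propositionally, not definitionally. In the paper~\eqref{eq:coh1} is arranged to hold by definition, while~\eqref{eq:coh2} is obtained as an identity via the IH's~\eqref{eq:coh2} and the fact that $\uparrow^{\mathsf{coh}}$ is $\refl$ on tuples; your worry about on-the-nose agreement is therefore stronger than what is required.
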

\begin{proof}
  We proceed by induction on $n$. When $n=0$, the map $\iota$ is simply the canonical equivalence between the cofibre of the unique pointed map $1 \to_\star A_0$ and ${A_0}$ itself,  and the theorem is trivial.

  Let $n \geq 1$ and let $f$, $g$ and $\widetilde{p}_n$ be as in the theorem statement. Let us, for clarity, rewrite the domain of $f$ and $g$ as the binary smash product $\left(\bigwedge_{i \leq {n-1}} A_{i}\right) \wedge A_n$. In order to construct \linebreak $p_n : (x : \left(\bigwedge_{i \leq {n-1}} A_{i}\right) \wedge A_n) \to f(x) = g(x)$, it suffices, by Lemma~\ref{lem:equaliser}, to define
  \begin{enumerate}[label=(\alph*)]
  \item \label{item:point-path} a path $p_n(\star_\wedge) : f(\star_\wedge) = g(\star_\wedge)$,
  \item \label{item:point-path-pairs} for $a_n : A_n$, homotopies $p_n\langle -, a_n \rangle : (x : \bigwedge_{i \leq {n-1}} A_{i}) \to f\langle x , y \rangle = g\langle x , y \rangle$, 
  \item \label{item:pushr} For each $a_n: A_n$, a filler of the square
    \[
\begin{tikzcd}[ampersand replacement=\&]
	{f(\star_\wedge)} \&\& {g(\star_\wedge)} \\
	{f\langle \star_\wedge,a_n\rangle} \&\& {g\langle \star_\wedge,a_n\rangle}
	\arrow["{p_n\langle \star_{\wedge} ,a_n\rangle}"', from=2-1, to=2-3]
	\arrow["{\ap{f}{\pushr{a_n}}}", from=2-1, to=1-1]
	\arrow["{\ap{g}{\pushr{a_n}}}"', from=2-3, to=1-3]
	\arrow["{p_n(\star_\wedge)}", from=1-1, to=1-3]
\end{tikzcd}
\]
  \item \label{item:pushl} For each $x: \bigwedge_{i \leq n-1}{A_i}$, a filler of the square
    \[
\begin{tikzcd}[ampersand replacement=\&]
	{f(\star_\wedge)} \&\& {g(\star_\wedge)} \\
	{f\langle x,\star_{A_n}\rangle} \&\& {g\langle x,\star_{A_n}\rangle}
	\arrow["{p_n\langle x ,\star_{A_n}\rangle}"', from=2-1, to=2-3]
	\arrow["{\ap{f}{\pushl{x}}}", from=2-1, to=1-1]
	\arrow["{\ap{g}{\pushl{x}}}"', from=2-3, to=1-3]
	\arrow["{p_n(\star_\wedge)}", from=1-1, to=1-3]
\end{tikzcd}
  \]
  \end{enumerate}
  For \ref{item:point-path}, we know that $\iota(\star_{\widetilde{\wedge}}) := \star_{\wedge}$ and thus $\widetilde{p}_n(\star_{\widetilde{\wedge}}) : f(\star_\wedge) = g(\star_\wedge)$. Consequently, we may simply set $p_n(\star_\wedge) := \widetilde{p}_n(\star_{\widetilde{\wedge}})$. Doing so, \eqref{eq:coh1} holds definitionally. For \ref{item:point-path-pairs}, we fix $a_n : A_n$. By the induction hypothesis, it suffices to construct $p_n\langle \iota(x),a_n\rangle : f\langle \iota(x) , y \rangle = g\langle \iota(x) , y \rangle$ for $x : \widetilde{\bigwedge}_{i \leq n-1}{A_i}$. We do this by the composite path
  \[
\begin{tikzcd}[ampersand replacement=\&]
	{f\langle \iota(x),a_n \rangle} \&\& {f(\iota(\uparrow (x,a_n)))} \&\& {g(\iota(\uparrow (x,a_n)))} \&\& {g\langle \iota(x),a_n\rangle}
	\arrow["{\ap{f}{\uparrow^{\mathsf{coh}}(x,a_n)}^{-1}}", from=1-1, to=1-3]
	\arrow["{\widetilde{p}_n(\uparrow (x,a_n))}", from=1-3, to=1-5]
	\arrow["{\ap{g}{\uparrow^{\mathsf{coh}}(x,a_n)}}", from=1-5, to=1-7]
\end{tikzcd}
\]
This completes the construction of $p_n \langle - , a_n\rangle $. Note that, since $p_n \langle - , a_n\rangle $ was constructed using the induction hypothesis, we also get the following from \eqref{eq:coh1}:
\begin{align*}
  p_n \langle \star_\wedge , a_n \rangle &= \ap{f}{\uparrow^{\mathsf{coh}}(\star_\wedge,a_n)}^{-1} \cdot {\widetilde{p}_n(\uparrow (\star_\wedge,a_n))} \cdot \ap{g}{\uparrow^{\mathsf{coh}}(\star_\wedge,a_n)} & \text{ by \eqref{eq:coh1}}\\
  &= \ap{f}{\pushr{a_n}} \cdot p_n(\star_\wedge) \cdot \ap{g}{\pushr{a_n}}^{-1} &\text{ by \eqref{eq:cohdef}}
\end{align*}
and the following from \eqref{eq:coh2}:
\begin{align*}
  p_n  \langle \vec{a}_{n-1} , a_n \rangle &= \ap{f}{\uparrow^{\mathsf{coh}}(\vec{a}_{n-1} ,a_n)}^{-1} \cdot {\widetilde{p}_n(\uparrow (\vec{a}_{n-1} ,a_n))} \cdot \ap{g}{\uparrow^{\mathsf{coh}}(\vec{a}_{n-1},a_n)} & \text{ by \eqref{eq:coh2}}\\
  &= \refl \cdot \widetilde{p}_n(\vec{a}_{n-1},a_n) \cdot \refl  &\text{ by \eqref{eq:cohdef-pairs}}\\
  &= \widetilde{p}_n(\vec{a}_{n-1},a_n)
\end{align*}
where $\vec{a}_{n-1}$ is short for $\langle a_0,\dots,a_{n-1} \rangle$. So, to summarise, we have the following two identities for $p\langle - , a_n \rangle$:
\begin{align}
  \label{eq:pid1}
  p_n \langle \star_\wedge , a_n \rangle = \ap{f}{\pushr{a_n}} \cdot p_n(\star_\wedge) \cdot \ap{g}{\pushr{a_n}}^{-1}
  \\
  \label{eq:pid2}
  p_n  \langle \langle a_0,\dots,a_{n-1}\rangle , a_n \rangle = \widetilde{p}_n  \langle a_0,\dots,a_{n-1} , a_n \rangle 
\end{align}
Note that, in particular, the latter identity verifies~\eqref{eq:coh2}. Let us now turn to \ref{item:pushr}. By \eqref{eq:pid1}, we simply need to fill the square
    \[
\begin{tikzcd}[ampersand replacement=\&]
	{f(\star_\wedge)} \&\&\&\&\&\& {g(\star_\wedge)} \\
	{f\langle \star_\wedge,a_n\rangle} \&\& \&\& \&\& {g\langle \star_\wedge,a_n\rangle}
	\arrow["\ap{f}{\pushr{a_n}} \cdot p_n(\star_\wedge) \cdot \ap{g}{\pushr{a_n}}^{-1}"', from=2-1, to=2-7]
	\arrow["{\ap{f}{\pushr{a_n}}}", from=2-1, to=1-1]
	\arrow["{\ap{g}{\pushr{a_n}}}"', from=2-7, to=1-7]
	\arrow["{p_n(\star_\wedge)}", from=1-1, to=1-7]
\end{tikzcd}
\]
which is entirely trivial by definition of path composition.

Finally, let us provide the data asked for in \ref{item:pushl}. Filling the square is equivalent to constructing, a homotopy $(x : \bigwedge_{i\leq n -1} A_i) \to \mathsf{left}(x) = \mathsf{right}(x)$ where $\mathsf{left},\mathsf{right} : \bigwedge_{i\leq n -1} A_i \to f(\star_\wedge) = g(\star_\wedge)$ are defined by
\begin{align*}
  \mathsf{left}(x) &:= p_n(\star_\wedge)\\
  \mathsf{right}(x) &:= \ap{f}{\pushl{x}}^{-1} \cdot p_n\langle x,\star_{A_n} \rangle \cdot \ap{g}{\pushl{x}}
\end{align*}
The codomain $f(\star_\wedge) = g(\star_\wedge)$ is homogeneous (with $\mathsf{left}(\star_\wedge)$ as basepoint) and thus Lemma~\ref{lem:evan-smash-unpointed} applies. Hence, we only need to show that
\[\mathsf{left}\langle a_0 , \dots, a_{n-1} \rangle = \mathsf{right}\langle a_0 , \dots, a_{n-1} \rangle \]
Let us rewrite $p_n\langle - , \star_{A_n} \rangle$ according to \eqref{eq:pid2} and transform the above back into square form. The problem is to fill the square
    \[
\begin{tikzcd}[ampersand replacement=\&]
	{f(\star_\wedge)} \&\&\&\& {g(\star_\wedge)} \\
	{f\langle \langle a_0,\dots,a_{n-1}\rangle,\star_{A_n}\rangle} \&\&\&\& {g\langle \langle a_0,\dots,a_{n-1}\rangle,\star_{A_n}\rangle}
	\arrow["{\widetilde{p}_n\langle a_0,\dots, a_{n-1} ,\star_{A_n}\rangle}"', from=2-1, to=2-5]
	\arrow["{\ap{f}{\pushll{\langle a_0,\dots,a_{n-1}\rangle}}}", from=2-1, to=1-1]
	\arrow["{\ap{g}{\pushll{\langle a_0,\dots,a_{n-1}\rangle}}}"', from=2-5, to=1-5]
	\arrow["{\widetilde{p}_n(\star_\wedge)}", from=1-1, to=1-5]
\end{tikzcd}
\]
Such a square is given precisely by $\ap{\widetilde{p}_n}{\pushop^{(2)}{(a_0,\dots,a_{n-1})}}$, and we are done.
\end{proof}
Let us restate the result in a more compact and self-contained form.
\begin{corollary}\label{cor:main}
  Let $f, g : \bigwedge_{i \leq n} A_i \to B$. If $f$ and $g$ agree on $ \iota : \widetilde{\bigwedge}_{i \leq n} A_i \to \bigwedge_{i \leq n} A_i$ , i.e.\ if $f \circ \iota = g \circ \iota$, then $f = g$.
\end{corollary}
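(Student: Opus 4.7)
The plan is to derive this as a direct consequence of Theorem~\ref{thm:equaliser}, essentially by unpacking the hypothesis with function extensionality and repackaging the conclusion the same way. There is almost no new work to do---the real content has already been established in the theorem.

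First I would convert the hypothesis $f \circ \iota = g \circ \iota$ into a pointwise statement. By function extensionality (or simply by \textsf{happly}), this equality yields a homotopy
\[ \widetilde{p}_n : (x : \widetilde{\bigwedge}_{i \leq n} A_i) \to f(\iota(x)) = g(\iota(x)). \]
This is exactly the shape of data required by Theorem~\ref{thm:equaliser}. Applying the theorem directly produces a homotopy $p_n : (x : \bigwedge_{i \leq n} A_i) \to f(x) = g(x)$.

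Finally, I would apply function extensionality once more to the homotopy $p_n$ to conclude $f = g$ as an equality in the function type $\bigwedge_{i \leq n} A_i \to B$. No further coherences are needed, and in particular the additional identities~\eqref{eq:coh1} and~\eqref{eq:coh2} satisfied by $p_n$ play no role here---they are only relevant when one wants to track how $p_n$ restricts along $\iota$ in subsequent applications. There is no real obstacle; the only subtlety is remembering that the content of the corollary is precisely the strength of Theorem~\ref{thm:equaliser}, presented in a form that is more convenient to cite in practice. If one wished to state a pointed variant (concluding $f = g$ as pointed functions when $B$ is pointed and $f,g$ are pointed with $\star_f = \star_g$ after composition with $\iota$), the same proof would go through verbatim, since $\iota$ preserves basepoints.
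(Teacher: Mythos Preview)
Your proposal is correct and matches the paper's intent: the corollary is presented there merely as a ``more compact and self-contained'' restatement of Theorem~\ref{thm:equaliser}, with no separate proof given. Your unpacking via \textsf{happly} and repackaging via function extensionality is exactly the (trivial) translation the paper leaves implicit.
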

For completeness, let us also state the analogous result for pointed maps.
\begin{corollary}\label{cor:main2}
  Let $f, g : \bigwedge_{i \leq n} A_i \to_\star B$. If $f \circ \iota = g \circ \iota$ as pointed maps, then we obtain an equality of pointed maps $f = g$.
\end{corollary}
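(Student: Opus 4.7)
The plan is to deduce the pointed equality directly from Corollary~\ref{cor:main} together with the definitional behaviour of the homotopy produced by Theorem~\ref{thm:equaliser} at the basepoint. Since a pointed-map equality in $\widetilde{\bigwedge}_{i \leq n} A_i \to_\star B$ unpacks, by function extensionality, into a homotopy $\widetilde{p}_n$ between the underlying maps plus a single coherence square involving the pointedness proofs, the hypothesis $f\circ \iota = g \circ \iota$ (as pointed maps) gives us both a homotopy $\widetilde{p}_n$ and a path $\sigma : \star_f = \widetilde{p}_n(\star_{\widetilde{\wedge}}) \cdot \star_g$ (using that $\iota$ is pointed by $\refl$, so that the pointedness of $f \circ \iota$ is just $\star_f$, and similarly for $g$).

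Next, I would feed $\widetilde{p}_n$ into Theorem~\ref{thm:equaliser} to obtain a homotopy $p_n : (x : \bigwedge_{i \leq n} A_i) \to f(x) = g(x)$, which, by function extensionality, yields an equality $p : f = g$ of the underlying plain functions (this is what Corollary~\ref{cor:main} packages). The crucial property is equation~\eqref{eq:coh1} from Theorem~\ref{thm:equaliser}, which states that $p_n(\star_\wedge) = \widetilde{p}_n(\star_{\widetilde{\wedge}})$. Substituting this into $\sigma$ gives $\star_f = p_n(\star_\wedge) \cdot \star_g$, which is precisely the coherence needed to upgrade $p$ to an equality of pointed functions.

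To assemble the final pointed equality, I would note that the type of pointed equalities $(f = g)$ in $A \to_\star B$ is equivalent to the type of pairs $(p, s)$ where $p : f = g$ as plain functions and $s$ is a path witnessing $\star_f = \ap{\mathsf{ev}_{\star_\wedge}}{p} \cdot \star_g$, with $\ap{\mathsf{ev}_{\star_\wedge}}{p}$ being $p_n(\star_\wedge)$ after $\mathsf{happly}$. The constructions above provide both components, so we are done.

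The only potential obstacle is careful bookkeeping of the pointedness of $\iota$: one must check that $\iota$ really is pointed by $\refl$ (which it is, since $\iota(\star_{\widetilde{\wedge}}) := \star_\wedge$ holds definitionally by construction of the cofibre) so that the pointedness proofs of $f \circ \iota$ and $g \circ \iota$ agree on the nose with $\star_f$ and $\star_g$. Once this is clear, no additional higher coherences intervene, because \eqref{eq:coh1} holds definitionally in the construction of Theorem~\ref{thm:equaliser}, making the required pointedness path a literal consequence of the hypothesis.
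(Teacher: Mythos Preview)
Your argument is correct and is precisely the intended one: the paper does not spell out a proof of this corollary, but the coherence~\eqref{eq:coh1} in Theorem~\ref{thm:equaliser} is recorded exactly so that the pointedness witness of the hypothesis can be transported to a pointedness witness for the conclusion, just as you describe. Your bookkeeping of $\iota$ being pointed by $\refl$ and the unpacking of pointed homotopies is accurate, so nothing further is needed.
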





\section{Conclusions and future work}
Let us summarise the key contributions of this paper. First and
foremost, we have shown in Sections~\ref{sec:assoc} through
\ref{sec:symmetric} that the smash product forms a 1-coherent
symmetric monoidal product on the wild category of pointed types (Theorem~\ref{thm:main}). This
fills a long-standing and rather troublesome gap in the HoTT
literature. In particular, it implies the correctness of previous
work which relies on this result---perhaps most notably, that that
of~\citep{Brunerie16}.

Secondly, we have presented a new method for reasoning about smash
products in HoTT. This was first done by introducing, in
Section~\ref{sec:heuristic}, an informal heuristic which was used
heavily leading up to Theorem~\ref{thm:main}. We later provided an
attempt at a formal version of the heuristic
in~Corollaries~\ref{cor:main}~and~\ref{cor:main2}. The message of
these results is simple: functions defined over smash products in HoTT
are not much harder to deal with than those defined over ordinary
Cartesian products. This contradicts a commonly held view that smash
products in HoTT become unworkable in higher dimensions.

In addition to salvaging already existing work relying on unproved
results about smash products, we also hope that the results presented
here will be useful in the further development of synthetic homotopy
theory in HoTT. In particular, Theorem~\ref{thm:main} is not the only
consequence
of~Propositions~\ref{prop:cup-comm}~and~\ref{prop:cup-assoc}. These
propositions capture two important properties of the equivalence
$\wedge_{n,m} : S^n \wedge S^m \simeq S^{n+m}$. For instance, \citet[Section
  VI.]{LM23} implicitly used the
special case when $n = m = 1$ in order to provide a simplified version of Brunerie's
proof of $\pi_4(S^3) \cong \bZ/2\bZ$. More generally, $\wedge_{n,m} :
S^n \wedge S^m \simeq S^{n+m}$ gives rise to the \emph{Whitehead
  product} \citep{whiteheadprod}. This operation was originally
defined in HoTT by~\citet[Definition 3.3.3.]{Brunerie16} using joins of spheres, but we can also
give it a rather compact construction in terms of $\wedge_{n,m}$: the Whitehead product $[f,g]$ of two
maps $f:S^{n+1} \to_\star A$ and $g: S^{m+1} \to_\star A$ can be
viewed as the map defined by the composition
\[ S^{n+m} \xrightarrow{\wedge_{n,m}^{-1}} (S^n \wedge S^m) \xrightarrow{\mathsf{comm}} \Omega (S^{n+1} \vee  S^{m+1}) \xrightarrow{\Omega(f\vee g)} \Omega A \]
where $\mathsf{comm}$ sends $\langle x,y\rangle$ to
$\sigma_{l}^{-1}(x) \cdot \sigma_{r}(y) \cdot \sigma_{l}(x) \cdot
\sigma_{r}^{-1}(y)$, where $\sigma : S^{k} \to \Omega S^{k+1}$ is
defined by $\sigma(x) = \merid{(x)}\cdot\merid{(\north)}^{-1}$ and the
subscripts $l$ and $r$ indicate in which component of $S^{n+1}
\vee S^{m+1}$ the loop takes place. This induces a map on homotopy
groups
\[\pi_{n+1}(A) \times \pi_{m+1}(A) \to \pi_{n+m}(\Omega(A)) \xrightarrow{\sim} \pi_{n+m+1}(A)\]
which, classically, turns $\pi_{\bullet}(A)$ into a graded quasi-Lie
algebra~\citep{UeharaMassey57}. This fact is still open in HoTT but we
conjecture that
Propositions~\ref{prop:cup-comm}~and~\ref{prop:cup-assoc} will play a
crucial role in a prospective proof. More concretely, as
$\wedge_{n,m}$ is used in the construction of this map,
Proposition~\ref{prop:cup-comm}~and~\ref{prop:cup-assoc} should, respectively, play
important roles in prospective proofs of graded
symmetry and the Jacobi identity.

On a related note, the heuristic presented
in~Section~\ref{sec:heuristic} and the related results in
Section~\ref{sec:equaliser} are reminiscent of results from the study
of polyhedral products~(see
e.g. \citet{bahri2019polyhedral,THERIAULT2018138}). This is a
field of mathematics which, very coarsely speaking, explores and
generalises the mediation between iterated wedge sums and iterated
Cartesian products. It is unclear whether this topic can be studied in
a meaningful way through the lens of HoTT, but exploring this would
certainly amount to an interesting continuation of the work we have
presented here.


\end{document}